\documentclass{amsart}
\usepackage{amsmath, amssymb, amsthm}

\newtheorem{thm}{Theorem}
\newtheorem{lem}[thm]{Lemma}
\newtheorem{pro}[thm]{Proposition}

\def\Diff{{\rm Diff}}
\def\Ham{{\rm Ham}}
\def\id{{\rm id}}
\def\O{{\mathcal O}}
\def\R{{\mathbb R}}
\def\Symp{{\rm Symp}}
\def\Z{{\mathbb Z}}

\linespread{1.1}

\title[$C^0$-rigidity of contact diffeomorphisms]{Gromov's alternative, contact shape, and \\ $C^0$-rigidity of contact diffeomorphisms}
\author[S.~M\"uller \& P.~Spaeth]{Stefan M\"uller and Peter Spaeth}
\address{University of Illinois at Urbana-Champaign, Urbana, IL 61801}
\email{stefanm@illinois.edu}
\address{Penn State University, Altoona, PA 16601}
\email{spaeth@psu.edu}

\subjclass[2010]{53D10, 53D35, 57R17}
% \keywords{$C^0$-rigidity of contact diffeomorphisms, Gromov's alternative, contact shape invariant}

\hyphenation{homeo-morphisms}

\begin{document}
\thispagestyle{plain}

\begin{abstract}
We prove that the group of contact diffeomorphisms is closed in the group of all diffeomorphisms in the $C^0$-topology.
By Gromov's alternative, it suffices to exhibit a diffeomorphism that can not be approximated uniformly by contact diffeomorphisms.
Our construction uses Eliashberg's contact shape.
\end{abstract}

\maketitle

\section{Introduction and main result} \label{sec:intro}
Let $(W^{2 n},\omega)$ be a connected symplectic manifold, and $\omega^n$ be the corresponding canonical volume form.
We assume throughout this paper that all diffeomorphisms and vector fields are compactly supported in the interior of a given manifold.

A symplectic diffeomorphism obviously preserves the volume form $\omega^n$, and thus so does a diffeomorphism that is the uniform limit of symplectic diffeomorphisms.
In the early 1970s, Gromov discovered a fundamental hard versus soft alternative \cite{gromov:pdr86}:
either the group $\Symp (W,\omega)$ of symplectic diffeomorphisms is $C^0$-closed in the group $\Diff (W)$ of all diffeomorphisms (hardness or rigidity), or its $C^0$-closure is (a subgroup of finite codimension in) the group $\Diff (W,\omega^n)$ of volume preserving diffeomorphisms (softness or flexibility).
See section~\ref{sec:gromov-alternative} for a more precise statement.

Gromov's alternative is a fundamental question that concerns the very existence of symplectic topology \cite{mcduff:ist98, donaldson:gld90}.
That rigidity holds was proved by Eliashberg in the late 1970s \cite{eliashberg:rsc82, eliashberg:tsv87}.
One of the most geometric expressions of this $C^0$-rigidity is Gromov's non-squeezing theorem.
Denote by $B^{2 n} (r)$ a closed ball of radius $r$ in $\R^{2 n}$ with its standard symplectic form $\omega_0$, and by $B^2 (R) \times \R^{2 n -2} = Z^{2 n} (R) \subset \R^{2 n}$ a cylinder of radius $R$ so that the splitting $\R^2 \times \R^{2 n - 2}$ is symplectic.

\begin{thm}[Gromov's non-squeezing theorem \cite{gromov:pcs85}] \label{thm:non-squeezing}
If there exists a symplectic embedding of $B^{2 n} (r)$ into $Z^{2 n} (R)$, then $r \le R$.
\end{thm}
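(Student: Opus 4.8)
The plan is to run Gromov's original argument by pseudoholomorphic curves. Suppose $\psi \colon B^{2 n}(r) \hookrightarrow Z^{2 n}(R)$ is a symplectic embedding; the goal is to deduce $r \le R$, equivalently $\pi r^2 \le \pi R^2$. First I would compactify the target. Since $\psi(B^{2 n}(r))$ is compact it is contained in $B^2(R) \times C$ for some large cube $C \subset \R^{2 n - 2}$; passing to a quotient of $C$ and enlarging the disk $B^2(R)$ to a $2$-sphere $S^2$ of area $\pi R^2 + \epsilon$ produces a closed symplectic manifold $(M, \Omega) = (S^2 \times T^{2 n - 2}, \sigma \oplus \tau)$ together with a symplectic embedding, still denoted $\psi$, of $B^{2 n}(r)$ into $M$. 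Note that $\pi_2(M) \cong \pi_2(S^2) = \Z \cdot A$ with $A = [S^2 \times \mathrm{pt}]$ and $\langle [\Omega], A \rangle = \pi R^2 + \epsilon$; this is where the special topology of the cylinder is exploited.

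Next I would choose an $\Omega$-compatible almost complex structure $J$ on $M$ that agrees with $\psi_* J_0$ on $\psi(B^{2 n}(r))$, where $J_0$ is the standard complex structure on the ball, and study the moduli space of $J$-holomorphic spheres in the class $A$. The essential point is that this moduli space is non-empty — in fact there is a $J$-holomorphic sphere in class $A$ through the center $\psi(0)$. For the split structure $j_{S^2} \oplus j_T$ the slices $S^2 \times \mathrm{pt}$ foliate $M$ and provide such spheres, and one transports existence to an arbitrary $J$ along a path of compatible almost complex structures by a Gromov-compactness plus cobordism/degree argument for the evaluation map. Here the topology of $M$ is used decisively once more: every spherical class is an integer multiple of $A$ and $[\Omega]$ is positive on nonconstant $J$-curves, so a class-$A$ sphere has the smallest positive area and hence cannot degenerate under bubbling — the limit of a sequence of $J_t$-holomorphic spheres in class $A$ is again a (nonconstant, simple) $J$-holomorphic sphere in class $A$. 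Setting up this compactness-and-transversality package rigorously is the technical heart of the proof and the main obstacle.

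Finally I would run the monotonicity estimate. Let $u \colon S^2 \to M$ be a $J$-holomorphic sphere in class $A$ with $\psi(0) \in u(S^2)$, and set $\Sigma = u^{-1}(\psi(B^{2 n}(r)))$, an open neighborhood in $S^2$ of a point mapping to the center of the ball. Then $\psi^{-1} \circ u|_\Sigma$ is a $J_0$-holomorphic — hence minimal — surface in $B^{2 n}(r)$ passing through the origin with boundary on $\partial B^{2 n}(r)$, so the monotonicity formula for minimal surfaces bounds its area below by $\pi r^2$. Since for $J$-holomorphic curves Riemannian area equals symplectic area, we obtain
\[
\pi r^2 \le \mathrm{Area}\bigl(u|_\Sigma\bigr) = \int_\Sigma u^*\Omega \le \int_{S^2} u^*\Omega = \langle [\Omega], A \rangle = \pi R^2 + \epsilon .
\]
Letting $\epsilon \to 0$ gives $\pi r^2 \le \pi R^2$, i.e.\ $r \le R$. (An alternative route packages all of this into a symplectic capacity $c$ with $c(B^{2 n}(r)) = \pi r^2$ and $c(Z^{2 n}(R)) = \pi R^2$, e.g.\ via the Hofer--Zehnder variational principle; monotonicity of $c$ then yields the inequality at once, though the value $c(Z^{2 n}(R))$ still rests on a hard existence theorem for periodic orbits.)
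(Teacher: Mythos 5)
The paper does not prove this theorem: it is imported as a black box with a citation to Gromov, and the whole point of the paper's architecture is to \emph{use} non-squeezing (via Gromov's alternative) rather than to reprove it. So there is no in-paper proof to compare against; what you have written is the standard pseudoholomorphic-curve argument from the literature, and as an outline it is structurally correct: compactify $Z^{2n}(R)$ to $S^2 \times T^{2n-2}$ with $\langle [\Omega], A\rangle = \pi R^2 + \epsilon$, use that $\pi_2$ of the target is generated by $A$ so that class-$A$ spheres cannot bubble, produce a $J$-holomorphic sphere through $\psi(0)$ for a compatible $J$ extending $\psi_* J_0$, and finish with monotonicity on the preimage in the ball.

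That said, be aware that as written this is a road map rather than a proof. The step you flag as ``the technical heart'' --- existence of a $J$-holomorphic sphere in class $A$ through $\psi(0)$ for an arbitrary compatible $J$ --- \emph{is} the theorem; everything else (the compactification, the homotopy-class bookkeeping, the monotonicity estimate) is comparatively routine. Deferring it means the proposal proves nothing until Gromov compactness, somewhere-injectivity/transversality for the split structure, and the cobordism argument for the evaluation map are actually carried out. A few smaller points to tighten if you were to write this up: the cube $C$ must be taken large enough that $\psi(B^{2n}(r))$ stays away from the identifications when you pass to the torus; the monotonicity (or Lelong-type) inequality applies to the \emph{proper} nonconstant $J_0$-holomorphic curve through the center of the open ball $B^{2n}(r')$ for each $r' < r$, giving area at least $\pi r'^2$ and hence $\pi r^2$ after a limit; and the inequality $\int_\Sigma u^*\Omega \le \int_{S^2} u^*\Omega$ uses pointwise nonnegativity of $u^*\Omega$, which is where compatibility (or at least tameness, with the correct choice of area functional) of $J$ enters. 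None of these is a fatal objection --- they are all handled in the standard treatments --- but the proposal as it stands is an accurate summary of a known proof, not a self-contained one.
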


It follows at once that $\Symp (\R^{2 n},\omega_0)$ is $C^0$-closed in $\Diff (\R^{2 n})$.
Using Darboux's theorem, one can easily deduce that rigidity holds for general symplectic manifolds.

\begin{thm}[$C^0$-rigidity of symplectic diffeomorphisms \cite{eliashberg:rsc82, eliashberg:tsv87}] \label{thm:symp-rigidity}
The group $\Symp (W,\omega)$ is $C^0$-closed in the group $\Diff (W)$ of all diffeomorphisms.
\end{thm}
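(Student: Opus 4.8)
The plan is to reduce the claim to the pointwise assertion that $d\phi_p$ is linear symplectic for each $p\in W$, to carry this into Darboux charts near $p$ and $\phi(p)$, and there to invoke Theorem~\ref{thm:non-squeezing} through Gromov's width. For $A\subseteq\R^{2 n}$ put $w(A)=\sup\{\pi r^2 : B^{2 n}(r)\hookrightarrow A\ \text{symplectically}\}$; this is monotone under inclusions, conformal of weight two ($w(\lambda A)=\lambda^2 w(A)$), and invariant under symplectic embeddings. The inclusions $B^{2 n}(r)\subseteq Z^{2 n}(r)$ together with Theorem~\ref{thm:non-squeezing} give $w(B^{2 n}(r))=w(Z^{2 n}(r))=\pi r^2$, and hence, after reducing to Williamson normal form, $w(E)=\pi\,a(E)^2$ for every bounded open ellipsoid $E$ centered at the origin, where $a(E)$ is its smallest symplectic semi-axis. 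I will take as known the accompanying linear statement (the linear version of symplectic rigidity, itself a consequence of Theorem~\ref{thm:non-squeezing}): a linear isomorphism $T$ of $\R^{2 n}$ with $w(T(E))=w(E)$ for every such $E$ satisfies $T^*\omega_0=\pm\,\omega_0$.

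So let $\phi_k\in\Symp(W,\omega)$ converge uniformly to $\phi\in\Diff(W)$; I must show $d\phi_p$ is linear symplectic for every $p\in W$. Fix $p$ and choose Darboux charts centered at $p$ and at $\phi(p)$, the second composed with a translation, so that in these charts $\phi$ and the $\phi_k$ become a smooth embedding $\psi$ and symplectic embeddings $\psi_k$ of a fixed neighborhood $O\ni 0$ into $(\R^{2 n},\omega_0)$, with $\psi(0)=0$ and $\psi_k\to\psi$ uniformly on compacta; it then suffices to pin down $T:=d\psi_0$ up to linear symplectic type. For small $\lambda>0$ I would pass to the rescalings $\psi^\lambda_k(w):=\lambda^{-1}\bigl(\psi_k(\lambda w)-\psi_k(0)\bigr)$: the chain rule gives $d(\psi^\lambda_k)_w=d(\psi_k)_{\lambda w}$, so each $\psi^\lambda_k$ is a symplectic embedding fixing the origin; for fixed $\lambda$ one has $\psi^\lambda_k\to\psi^\lambda:=\lambda^{-1}\psi(\lambda\,\cdot\,)$ uniformly on compacta as $k\to\infty$; and $\psi^\lambda\to T$ uniformly on compacta as $\lambda\to 0$. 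A diagonal choice $\lambda_j\to 0$, $k_j\to\infty$ then exhibits the \emph{linear} map $T$ as a limit, uniform on compacta, of symplectic embeddings $f_j:=\psi^{\lambda_j}_{k_j}$.

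It remains to show that this forces $w(T(E))=w(E)$ for every bounded open ellipsoid $E$ centered at $0$; granting it, $T^*\omega_0=\pm\omega_0$ by the linear statement, so $(\phi^*\omega)_p=\pm\,\omega_p$ at every $p$, and since $W$ is connected and the loci $\{(\phi^*\omega)_x=\omega_x\}$ and $\{(\phi^*\omega)_x=-\omega_x\}$ are closed, a single sign occurs throughout; the minus sign is excluded because $\phi$ is orientation preserving and equal to the identity off a proper compact set, so $(\phi^*\omega)_x=\omega_x$ somewhere. For the identity $w(T(E))=w(E)$: once $j$ is large $E$ lies in the domain of $f_j$, so $w(f_j(E))=w(E)$, while $f_j(E)\to T(E)$ in the Hausdorff distance. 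Because $T(E)$ is a convex body with $0$ in its interior, uniform closeness traps $f_j(E)$ between $(1-\varepsilon_j)T(E)$ and $(1+\varepsilon_j)T(E)$ with $\varepsilon_j\to 0$: the outer inclusion is immediate, and the inner one follows from a Brouwer fixed-point argument applied to $f_j\circ T^{-1}$, which is uniformly close to the identity on $\overline{T(E)}$. Monotonicity and conformality of $w$ then give $(1-\varepsilon_j)^2 w(T(E))\le w(E)\le(1+\varepsilon_j)^2 w(T(E))$, and $j\to\infty$ yields the claim. The Darboux reduction is routine: the pointwise identity just obtained is exactly the conclusion of the argument in $\R^{2 n}$ transported back through the charts.

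I expect the main obstacle to be the substance of the previous two paragraphs taken together. A $C^0$-limit carries no control of derivatives, so one must first manufacture the linearization $T=d\psi_0$ out of purely uniform data (the rescaling), and then detect its linear-symplectic type by evaluating $w$ --- defined by a supremum over embedded balls --- on the highly non-convex sets $f_j(E)$. Making the inner inclusion $f_j(E)\supseteq(1-\varepsilon_j)T(E)$ precise, and more broadly establishing enough continuity of $w$ under Hausdorff approximation, is the one genuinely non-formal step; it is precisely here, through $w(Z^{2 n}(r))=\pi r^2$, that Theorem~\ref{thm:non-squeezing} does the work. Everything else --- the chart reductions, the rescaling, the connectedness argument --- is soft.
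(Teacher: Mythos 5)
Your argument is correct in outline but takes a genuinely different route from the paper. The paper deduces Theorem~\ref{thm:symp-rigidity} from Gromov's alternative: it exhibits a single compactly supported, volume-preserving diffeomorphism of a Darboux chart that squeezes $B^{2n}(r)$ into $Z^{2n}(r/4)$, shows via Theorem~\ref{thm:non-squeezing} that this map cannot be a $C^0$-limit of symplectomorphisms, and then invokes Gromov's maximality theorem together with Lemma~\ref{lem:no-anti-symp} to conclude that the $C^0$-closure contains nothing beyond $\pm$-symplectic maps. You instead run the capacity/$C^0$-characterization argument --- precisely the alternative route the introduction attributes to later work and says does not rely on Gromov's alternative: rescale at a point to exhibit $d\phi_p$ as a locally uniform limit of symplectic embeddings, show that it preserves the Gromov width of ellipsoids by a Hausdorff-sandwich plus Brouwer-degree argument, and invoke linear rigidity. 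This avoids the hard implicit-function-theorem machinery behind the maximality theorem and proves more (a pointwise characterization of the limit), at the price of two inputs you use as black boxes: the width computation for ellipsoids and, more substantially, the linear statement that a width-preserving linear isomorphism is $\pm$-symplectic. Both are standard and both reduce to Theorem~\ref{thm:non-squeezing}, so that is a legitimate trade.

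There is, however, one genuine gap: the exclusion of the anti-symplectic sign. The width of an ellipsoid is preserved by anti-symplectic linear maps, so the capacity argument can only yield $\phi^*\omega=\pm\omega$, with a single sign on the connected $W$; some separate input must kill the minus sign, and this is exactly the content of the paper's Lemma~\ref{lem:no-anti-symp}. Your two stated reasons do not suffice when $W$ is closed: ``orientation preserving'' fails because for $n$ even an anti-symplectic map preserves $\omega^n$, and ``equal to the identity off a proper compact set'' is vacuous on a closed manifold, where the standing compact-support convention imposes no condition. For closed $W$ you need the paper's cohomological argument (sufficiently $C^0$-close homeomorphisms induce the same map on $H^*(W,\R)$, so the limit fixes the non-zero class $[\omega]$ and hence cannot pull $\omega$ back to $-\omega$), or the product-with-a-surface trick. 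With that patch your proof is complete.
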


See section~\ref{sec:gromov-alternative} for details.
It was observed later that symplectic capacities give rise to a $C^0$-characterization of symplectic diffeomorphisms, and in particular to proofs of Theorem~\ref{thm:symp-rigidity} that do not rely on Gromov's alternative \cite{mcduff:ist98, hofer:sih94}.
Note that the existence of symplectic capacities is equivalent to Theorem~\ref{thm:non-squeezing}.

Let $(M,\xi)$ be a connected contact manifold of dimension $2 n - 1$, that is, $\xi \subset TM$ is a completely non-integrable codimension one tangent distribution.
In this paper, all contact structures are assumed to be coorientable, i.e.\ there exists a $1$-form $\alpha$ with $\ker \alpha = \xi$ and $\alpha \wedge (d\alpha)^{n - 1}$ is a volume form on $M$.
Moreover, we always fix a coorientation of $\xi$, so that the contact form $\alpha$ is determined up to multiplication by a positive function, and unless explicit mention is made to the contrary, a contact diffeomorphism is assumed to preserve the coorientation of $\xi$.

In the contact case, the analog of Gromov's alternative is the following:
the group $\Diff (M,\xi)$ of contact diffeomorphisms is either $C^0$-closed or $C^0$-dense in $\Diff (M)$.
See section~\ref{sec:gromov-alternative} for a precise statement.
A popular folklore theorem among symplectic and contact topologists is that an analog of Theorem~\ref{thm:symp-rigidity} holds in the contact case.
However, up to this point no proof has been published.
The purpose of this paper is to remedy this situation by proving the following main theorem.

\begin{thm}[$C^0$-rigidity of contact diffeomorphisms] \label{thm:contact-rigidity}
The contact diffeomorphism group $\Diff (M,\xi)$ is $C^0$-closed in the group $\Diff (M)$ of all diffeomorphisms.
That is, if $\varphi_k$ is a sequence of contact diffeomorphisms of $(M,\xi)$ that converges uniformly to a diffeomorphism $\varphi$, then $\varphi$ preserves the contact structure $\xi$.
\end{thm}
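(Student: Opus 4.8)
The plan is to combine Gromov's alternative with Eliashberg's contact shape. By the alternative recalled above, $\Diff (M,\xi)$ is either $C^0$-closed or $C^0$-dense in $\Diff (M)$, so it suffices to refute density, that is, to produce a single diffeomorphism that is not a $C^0$-limit of contact diffeomorphisms. As established in section~\ref{sec:gromov-alternative}, the precise form of the alternative reduces matters still further: it is enough to refute density on one conveniently chosen contact manifold, on which the contact shape is a nontrivial invariant. I take $M_0 = T^* T^{n-1} \times S^1$ with contact form $\alpha_0 = dz - p\, dq$, where $q \in T^{n-1}$, $p \in \R^{n-1}$, and $z \in S^1 = \R/\Z$; all maps are understood to be compactly supported.

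Recall that Eliashberg's contact shape attaches to an open subset $U \subseteq M_0$ a subset $\mathrm{Sh} (U) \subseteq H^1 (T^{n-1};\R) = \R^{n-1}$, read off from the pre-Lagrangian tori contained in $U$: one has $p_0 \in \mathrm{Sh} (U)$ when $U$ contains a pre-Lagrangian torus whose restricted contact form represents the cohomology class associated with $p_0$, the flat torus $\{ p \equiv p_0 \}$ being the model example. The two formal properties I use are \emph{monotonicity}, that $U_1 \subseteq U_2$ implies $\mathrm{Sh} (U_1) \subseteq \mathrm{Sh} (U_2)$, and \emph{contact invariance}, that $\mathrm{Sh} (\psi (U)) = \mathrm{Sh} (U)$ for every coorientation-preserving contactomorphism $\psi$ defined on a neighborhood of $\overline{U}$; together they give that $\psi (U_1) \subseteq U_2$ forces $\mathrm{Sh} (U_1) \subseteq \mathrm{Sh} (U_2)$. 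The essential computation --- the contact analogue of Theorem~\ref{thm:non-squeezing}, and the point at which hard contact topology enters --- is that for the slab $U_r := \{ |p| < r \} \subseteq M_0$ one has $\mathrm{Sh} (U_r) = \{ |p_0| < r \}$; the nontrivial half is the upper bound, namely that no pre-Lagrangian torus inside $U_r$ can represent a larger class.

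Granting this, fix $r > 0$ and $\lambda \in (0, 1/2)$, and let $\varphi_0 \in \Diff (M_0)$ be a compactly supported diffeomorphism that coincides with the map $(q,p,z) \mapsto (q, \lambda p, z)$ on $\{ |p| < 10 r \}$ and with the identity on $\{ |p| > 20 r \}$ --- such a $\varphi_0$ exists, obtained by interpolating radially in the $p$-variable, and it is \emph{not} conformally contact since $\varphi_0^* \alpha_0 = dz - \lambda p\, dq$ is not a multiple of $\alpha_0$. I claim $\varphi_0$ is not a $C^0$-limit of contactomorphisms. Suppose $\psi_k \in \Diff (M_0,\xi_0)$ with $\psi_k \to \varphi_0$ uniformly, and set $U = U_r$ and $\hat U = U_{2 r}$. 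Since $\varphi_0$ carries $\hat U$ onto $U_{2 \lambda r}$ and $U$ onto $U_{\lambda r}$, uniform convergence gives $\psi_k (U) \subseteq U_{2 \lambda r} = \varphi_0 (\hat U)$ for all large $k$. Invariance and monotonicity then yield
\[
\{ |p_0| < r \} = \mathrm{Sh} (U) = \mathrm{Sh} (\psi_k (U)) \subseteq \mathrm{Sh} (\varphi_0 (\hat U)) = \{ |p_0| < 2 \lambda r \},
\]
which is impossible because $2 \lambda r < r$. Hence $\Diff (M_0,\xi_0)$ is not $C^0$-dense in $\Diff (M_0)$, and by Gromov's alternative (together with the locality of softness) $\Diff (M,\xi)$ is $C^0$-closed in $\Diff (M)$ for every connected contact manifold.

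I expect the real obstacle to be the shape computation $\mathrm{Sh} (U_r) = \{ |p_0| < r \}$, specifically its upper bound: ruling out pre-Lagrangian tori in the slab $U_r$ that carry too large a cohomology class. This is the contact counterpart of Gromov's non-squeezing theorem and carries all the genuine content, presumably via holomorphic curves in the symplectization or Eliashberg's original techniques. By comparison, establishing the two formal properties of $\mathrm{Sh}$, verifying that the interpolated $\varphi_0$ is a genuine diffeomorphism, and making precise the reduction from an arbitrary $(M,\xi)$ to the model $M_0$ are routine, though each requires some care.
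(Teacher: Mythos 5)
Your proposal is correct in outline and is essentially the paper's argument: decide Gromov's alternative in favor of rigidity by exhibiting a single diffeomorphism, supported near a model neighborhood, that squeezes a set whose Eliashberg shape obstructs the squeezing. Your model is even contactomorphic to the paper's: the symplectization of $M_0 = T^{n-1}\times\R^{n-1}\times S^1$ with $\alpha_0 = dz - p\,dq$ is $T^n\times\{P_0>0\}\subset T^*T^n$ via $(q,p,z,t)\mapsto(z,q,t,-tp)$, so your slab $U_r=\{|p|<r\}$ is $T^n\times A_r$ for a spherical cap $A_r\subset S^{n-1}$, and your ``essential computation'' $\mathrm{Sh}(U_r)=\{|p_0|<r\}$ is exactly Proposition~\ref{pro:a-shape} (Eliashberg's Proposition~3.4.1), which you are right to identify as the point where the hard input enters; it can be cited rather than reproved. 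The bookkeeping you suppress --- that the shape depends on a choice of homomorphism $\tau$ on $H^1$ and that one must check $\psi_k$ acts trivially on it --- does need a sentence (the paper uses that $\iota_k$ is homotopic to $\varphi|_{T^n\times B(r)}$ for large $k$; in your global setup it follows because a compactly supported diffeomorphism of $M_0$ acts trivially on $H^1(M_0,\R)$), but this is minor.

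The one step you should not dismiss as routine is the reduction from a general $(M,\xi)$ to the model. ``Locality'' here is not a formal consequence of Darboux's theorem: your slab $U_r$ contains closed pre-Lagrangian $n$-tori, and no Darboux ball does, which is precisely why the paper remarks that the non-squeezing approach in $\R^{2n}\times S^1$ fails to localize (a Darboux chart contains no subset of the form $U\times S^1$). What is needed is a contact embedding of a full neighborhood of such a torus --- equivalently of $T^n\times A\subset ST^*T^n$ --- into an arbitrary $(M,\xi)$, and this is the content of the paper's Lemma~8: one takes a transverse knot, applies the contact neighborhood theorem to get $S^1\times U\subset S^1\times\R^{2n-2}$ with the form $dz+\sum r_i^2\,d\theta_i$, and writes down an explicit contactomorphism from the locus $\{r_i>0\ \forall i\}$ onto the positive-coordinate region of $T^n\times S^{n-1}$. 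Your argument goes through once this lemma is supplied, but as written the phrase ``together with the locality of softness'' papers over the only genuinely new construction in the proof.
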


Similar to the case of symplectic diffeomorphisms, to prove Theorem~\ref{thm:contact-rigidity} we single out an invariant of contact diffeomorphisms that is (at least in the special case that is considered below) continuous with respect to the $C^0$-topology.
In section~\ref{sec:rigidity} we will use the contact shape defined in \cite{eliashberg:nio91} to construct a diffeomorphism that cannot be approximated uniformly by contact diffeomorphisms.
The construction is local and therefore applies to any contact manifold.
We recall Eliashberg's shape invariant in section~\ref{sec:contact-shape}.
We suspect that there are alternate proofs of Theorem~\ref{thm:contact-rigidity} based on other contact invariants.
It would also be interesting to discover a $C^0$-characterization of contact diffeomorphisms (cf.\ \cite{mcduff:ist98}) similar to that of (anti-) symplectic diffeomorphisms, and then to find a proof of contact rigidity that does not use Gromov's alternative.

To make some final remarks, let $\varphi_k$ and $\varphi$ be as in the rigidity theorem.
Then $\varphi_k^* \alpha = e^{g_k} \alpha$ and $\varphi^* \alpha = e^g \alpha$ for smooth functions $g_k$ and $g$ on $M$.
The hypothesis of Theorem~\ref{thm:contact-rigidity} does not guarantee that the sequence $g_k$ converges to $g$ even pointwise, see \cite{ms:tcd1, ms:tcd2}.
The following theorem is a special case of a contact rigidity theorem that is proved in the context of topological contact dynamics in \cite{ms:tcd1}.

\begin{thm}[{$C^0$-rigidity of contact diffeomorphisms and conformal factors \cite{ms:tcd1}}] \label{thm:contact-cf-rigidity}
Let $(M,\xi = \ker \alpha)$ be a contact manifold with a contact form $\alpha$, and $\varphi_k$ be a sequence of contact diffeomorphisms of $(M,\xi)$ that converges uniformly to a diffeomorphism $\varphi$, such that $\varphi_k^* \alpha = e^{g_k} \alpha$, and $g_k$ converges to a (a priori not necessarily smooth) function $g$ uniformly.
Then $\varphi$ is a contact diffeomorphism, the function $g$ is smooth, and $\varphi^* \alpha = e^g \alpha$.
In particular, if $\{ \varphi_H^t \}$ and $\{ \varphi_F^t \}$ are two contact isotopies that are generated by smooth functions $H$ and $F$ on $M$, respectively, then $\varphi_H^t = \varphi \circ \varphi_F^t \circ \varphi^{-1}$ for all times $t$ if and only if $H_t = e^{- g} (F_t \circ \varphi)$ for all $t$.
\end{thm}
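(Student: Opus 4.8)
The plan is to derive the first assertion from Theorem~\ref{thm:contact-rigidity} together with a volume-theoretic comparison of conformal factors, and the second assertion from the uniqueness of integral curves of contact vector fields. By Theorem~\ref{thm:contact-rigidity} the limit $\varphi$ lies in $\Diff(M,\xi)$; since contact diffeomorphisms in our sense preserve the coorientation, $\varphi$ is orientation preserving and $\varphi^*\alpha = e^h\alpha$ for a unique smooth function $h$ on $M$, and it remains to show $h = g$. Set $\mu = \alpha\wedge(d\alpha)^{n-1}$. A short calculation shows that $\psi^*\alpha = e^f\alpha$ implies $\psi^*\mu = e^{nf}\mu$, so that $\varphi_k^*\mu = e^{ng_k}\mu$ and $\varphi^*\mu = e^{nh}\mu$. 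Now fix a continuous function $\chi$ with compact support. The change of variables formula gives $\int_M \chi\,\varphi_k^*\mu = \int_M (\chi\circ\varphi_k^{-1})\,\mu$; since all maps are compactly supported and $\varphi_k\to\varphi$ uniformly, the inverses converge uniformly, $\varphi_k^{-1}\to\varphi^{-1}$, hence $\chi\circ\varphi_k^{-1}\to\chi\circ\varphi^{-1}$ uniformly and $\int_M\chi\,\varphi_k^*\mu\to\int_M(\chi\circ\varphi^{-1})\,\mu = \int_M\chi\,\varphi^*\mu = \int_M\chi\,e^{nh}\mu$. On the other hand, $g_k\to g$ uniformly gives $\int_M\chi\,\varphi_k^*\mu = \int_M\chi\,e^{ng_k}\mu\to\int_M\chi\,e^{ng}\mu$. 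Comparing these limits over all such $\chi$ forces $e^{nh}=e^{ng}$, hence $h=g$; in particular $g$ is smooth and $\varphi^*\alpha = e^g\alpha$. (The same computation also identifies $e^{ng}$ with the smooth density of $\varphi^*\mu$ relative to $\mu$, so the smoothness of $g$ by itself does not require Theorem~\ref{thm:contact-rigidity}.)

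For the second assertion, recall that a contact isotopy $\{\psi^t\}$ with $\psi^0 = \id$ is uniquely determined by its generating contact Hamiltonian: the contact vector field $X^t$ with $\frac{d}{dt}\psi^t = X^t\circ\psi^t$ is determined by the function $\alpha(X^t)$, and the isotopy is then the unique solution of the resulting ordinary differential equation with initial condition $\psi^0 = \id$. A direct computation with Cartan's formula, using $\varphi^*\alpha = e^g\alpha$, shows that $\{\varphi\circ\varphi_F^t\circ\varphi^{-1}\}$ is a contact isotopy whose generating contact Hamiltonian is the conformal transform $e^{-g}(F_t\circ\varphi)$ of $F_t$ by $\varphi$. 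Hence, if $H_t = e^{-g}(F_t\circ\varphi)$ for all $t$, then $\{\varphi\circ\varphi_F^t\circ\varphi^{-1}\}$ and $\{\varphi_H^t\}$ satisfy the same differential equation with the same initial condition, so they coincide for all $t$; conversely, if $\varphi_H^t = \varphi\circ\varphi_F^t\circ\varphi^{-1}$ for all $t$, then differentiating in $t$ and comparing generating contact Hamiltonians yields $H_t = e^{-g}(F_t\circ\varphi)$.

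I expect the volume-theoretic step in the first paragraph to be the main point. The hypothesis $\varphi_k\to\varphi$ uniformly gives no control on the derivatives of the $\varphi_k$, so the conformal factors $g_k$ need not converge pointwise to $h$ a priori (cf.\ the remark preceding this theorem); it is precisely the uniform convergence $g_k\to g$, processed through the integrals $\int_M\chi\,\varphi_k^*\mu$, that pins down $\varphi^*\mu$ and forces $h=g$. The remaining ingredients are routine: the first assertion modulo this step is exactly Theorem~\ref{thm:contact-rigidity}, and the second assertion is a formal consequence of the conjugation formula for contact Hamiltonians together with uniqueness of contact flows.
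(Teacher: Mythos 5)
This theorem is not proved in the present paper; it is quoted from \cite{ms:tcd1}, where it is established in a more general topological setting (limits that are a priori only homeomorphisms) by symplectization-based methods and an energy-capacity inequality. Your argument is therefore necessarily a different route, and for the special case stated here --- where the limit $\varphi$ is assumed to be a diffeomorphism --- your first paragraph is correct and considerably more elementary. Theorem~\ref{thm:contact-rigidity} gives $\varphi \in \Diff(M,\xi)$, hence $\varphi^*\alpha = e^h\alpha$ with $h$ smooth; the computation $\psi^*\alpha = e^f\alpha \Rightarrow \psi^*\bigl(\alpha\wedge(d\alpha)^{n-1}\bigr) = e^{nf}\,\alpha\wedge(d\alpha)^{n-1}$ is right (the cross terms involving $df\wedge\alpha$ die against the leading $\alpha$); and the weak identification of $\varphi^*\mu$ with $e^{ng}\mu$ forces $h = g$ since both are continuous and $\mu$ has full support. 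A slightly cleaner variant avoids inverses altogether: $\int_M (\chi\circ\varphi_k)\,e^{ng_k}\mu = \int_M \chi\,\mu$ passes to the limit directly and yields $\varphi_*(e^{ng}\mu) = \mu$. It is worth stating explicitly that your derivation shows the first assertion, in this special case, \emph{does} follow from Theorem~\ref{thm:contact-rigidity} by soft volume-theoretic arguments; this does not contradict the paper's remark that neither theorem implies the other, which concerns the statements as such and the general version in \cite{ms:tcd1}.

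The one step you must still nail down is the formula in your second paragraph. With the most common conventions ($\tfrac{d}{dt}\varphi_F^t = X_t\circ\varphi_F^t$, $F_t = \alpha(X_t)$, $\varphi^*\alpha = e^g\alpha$), the direct computation gives $\alpha(\varphi_*X) = \bigl(e^{g}\,\alpha(X)\bigr)\circ\varphi^{-1}$, so that $\varphi\circ\varphi_F^t\circ\varphi^{-1}$ is generated by $(e^{g}F_t)\circ\varphi^{-1}$, while $e^{-g}(F_t\circ\varphi)$ is the Hamiltonian of the conjugation in the \emph{other} direction, $\varphi^{-1}\circ\varphi_F^t\circ\varphi$. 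Your overall strategy --- the conjugation formula for contact Hamiltonians combined with uniqueness of the contact isotopy generated by a given Hamiltonian --- is exactly the right one, but you assert the computation lands on the stated formula without performing it; check which convention for the generating Hamiltonian and for the conformal factor is in force, since under the standard one the stated identity matches the opposite conjugation.
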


This result is of particular relevance in contact dynamics, and should be viewed as a complement to Theorem~\ref{thm:contact-rigidity}.
It is worthwhile to note that neither Theorem~\ref{thm:contact-rigidity} nor Theorem~\ref{thm:contact-cf-rigidity} implies the other result.

\section*{Acknowledgements}
We would like to thank Yasha Eliashberg for many helpful discussions regarding the proof of $C^0$-rigidity of contact diffeomorphisms, in particular during a recent conference at ETH Z\"urich in June 2013.

\section{Gromov's alternative} \label{sec:gromov-alternative}
Recall that the group $\Ham (W,\omega)$ of Hamiltonian diffeomorphisms is comprised of those diffeomorphisms that are time-one maps of (time-dependent) vector fields $X$ such that the $1$-forms $\iota_{X_t} \omega = \omega (X_t, \cdot)$ are exact for all times $t$.

\begin{thm}[{Gromov's maximality theorem \cite[Section~3.4.4~H]{gromov:pdr86}}]
Let $(W^{2 n},\omega)$ be a connected symplectic manifold.
Suppose that a (not necessarily closed) subgroup $G \subset \Diff (W,\omega^n)$ contains all Hamiltonian diffeomorphisms, and that $G$ contains an element $\psi$ that is neither symplectic nor anti-symplectic, i.e.\ $\psi^* \omega \not= \pm \omega$.
Then $G$ contains the subgroup of volume preserving diffeomorphisms with zero flux.
\end{thm}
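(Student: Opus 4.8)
The plan is to argue infinitesimally. Since $\psi$ preserves $\omega^n$, the form $\omega' := (\psi^{-1})^*\omega$ is symplectic, satisfies $(\omega')^n = \omega^n$, and obeys $\omega' \neq \pm\omega$ exactly because $\psi^*\omega \neq \pm\omega$. If $\{\varphi_H^t\}$ is generated by the $\omega$-Hamiltonian vector field $X_H$ of a (possibly time dependent) function $H$, then $\psi\circ\varphi_H^t\circ\psi^{-1}$ is generated by $\psi_*X_H$, and $\iota_{\psi_*X_H}\omega' = (\psi^{-1})^*(\iota_{X_H}\omega) = d(H\circ\psi^{-1})$, so it is the $\omega'$-Hamiltonian flow of $H\circ\psi^{-1}$. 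As $G$ is a group containing $\Ham(W,\omega)$ and $\psi$, it therefore contains the subgroup $\mathcal{H}$ generated by $\Ham(W,\omega)\cup\Ham(W,\omega')$. It thus suffices to prove that $\mathcal{H}$ already exhausts the group of compactly supported volume preserving diffeomorphisms of zero flux.

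Let $\mathfrak{g}$ denote the Lie algebra of compactly supported vector fields generated under the bracket by the $\omega$-Hamiltonian and the $\omega'$-Hamiltonian fields; its flows, and their group commutators, all generate one-parameter families in $G$. One inclusion is immediate: if $\iota_X\omega = dH$ then $\iota_X\omega^n = n\,d(H\,\omega^{n-1})$, and the flux of a bracket of divergence free fields always vanishes (one has $\iota_{[X,Y]}\omega^n = d\iota_X\iota_Y\omega^n$ when $\iota_X\omega^n,\iota_Y\omega^n$ are closed), so every element of $\mathfrak{g}$ is divergence free with zero flux. For the reverse inclusion, let $X$ be compactly supported, divergence free and of zero flux, so $\iota_X\omega^n = d\beta$ with $\beta$ a \emph{compactly supported} $(2n-2)$-form. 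Using a partition of unity subordinate to a finite cover of $\operatorname{supp} X$ by Darboux balls $B_j$, write $\beta = \sum_j\beta_j$ and let $X_j$ be the field with $\iota_{X_j}\omega^n = d\beta_j$; then $X = \sum_j X_j$ with each $X_j$ divergence free and supported in $B_j$. Hence it is enough to show that for a Darboux ball $B$ the vector fields in $\mathfrak{g}$ supported in $B$ include every divergence free vector field supported in $B$ (on a ball zero flux is automatic, since $H^{2n-1}_c(B)=0$).

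This ball case is the heart of the matter, and it is where the hypothesis $\omega'\neq\pm\omega$, and with it the fact that $n\geq2$, is used. Choose $B$ around a point $p$ with $\omega'_p\neq\pm\omega_p$, and take Darboux coordinates for $\omega$, so that $\omega=\omega_0$ on $B$ and $(\omega'_p)^n = \omega_0^n$ with $\omega'_p\neq\pm\omega_0$. Consider the degree-zero symbol at $p$, that is, the linearizations at $p$ of the fields in the family that vanish at $p$: the $\omega_0$-Hamiltonian fields contribute $\mathfrak{sp}(\omega_0)$, the $\omega'$-Hamiltonian fields contribute $\mathfrak{sp}(\omega'_p)$, while the full divergence free symbol in degree zero is $\mathfrak{sl}_{2n}$. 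Now $\mathfrak{sp}(\omega'_p)\not\subseteq\mathfrak{sp}(\omega_0)$ --- otherwise $\omega'_p$ would be a scalar multiple of $\omega_0$, hence $\pm\omega_0$ --- and $\mathfrak{sp}(\omega_0)$ is a maximal subalgebra of $\mathfrak{sl}_{2n}$, so the two symplectic subalgebras together generate all of $\mathfrak{sl}_{2n}$ under the bracket. Thus the symbol of $\mathfrak{g}$ at $p$ carries the full $\mathfrak{sl}_{2n}$ in degree zero; since $\mathfrak{g}$ acts transitively (already the $\omega_0$-Hamiltonian fields span every tangent space), the structure theory of transitive (primitive) Lie algebras of vector fields --- in the spirit of Cartan's classification and of Mather's description of the divergence free Lie algebra --- forces $\mathfrak{g}$ to be the entire Lie algebra of divergence free fields on $B$. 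This gives the reverse inclusion of the previous paragraph, so $\mathfrak{g}$ is precisely the Lie algebra of compactly supported divergence free vector fields of zero flux.

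Finally one integrates this infinitesimal statement back to the group level. By the Thurston--Mather theorem the target group is the identity component of the group of compactly supported volume preserving diffeomorphisms, and by fragmentation each of its elements is a finite product of volume preserving diffeomorphisms supported in Darboux balls; combining this with the infinitesimal transitivity of the preceding paragraph and commutator identities of Thurston--Mather type, one writes each such factor as an explicit finite word in $\omega$- and $\omega'$-Hamiltonian diffeomorphisms, all of which lie in $\mathcal{H}\subseteq G$. I expect the main obstacle to be exactly the ball case: showing that Lie brackets of the two transverse families of Hamiltonian vector fields genuinely fill out all divergence free directions of zero flux, a softness phenomenon that is available only because $\omega'\neq\pm\omega$, and which fails when $\omega'=\pm\omega$ in accordance with symplectic rigidity. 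A secondary difficulty is that $G$ is not assumed $C^0$-closed, so the last step must produce the required diffeomorphisms as honest finite compositions of elements of $G$ rather than as $C^0$-limits; this is precisely what the fragmentation-and-commutator argument, rather than an abstract integration of $\mathfrak{g}$, is designed to accomplish.
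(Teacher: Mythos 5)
Your overall strategy --- conjugate $\Ham (W,\omega)$ by $\psi$ to obtain $\Ham (W,\omega')$ with $\omega' = (\psi^{-1})^*\omega \neq \pm\omega$, then show the two Hamiltonian families together fill out all exact divergence-free directions --- is the right starting point, and your pointwise linear algebra ($\mathfrak{sp}(\omega'_p) \not\subseteq \mathfrak{sp}(\omega_0)$ because a symplectic algebra determines its form up to scale, combined with maximality of $\mathfrak{sp}_{2n}$ in $\mathfrak{sl}_{2n}$) is exactly the ``easy part'' of Gromov's argument. But there is a genuine gap at the decisive step. Knowing that the degree-zero symbols generate $\mathfrak{sl}_{2n}$ at a point, and even knowing via the classification of primitive transitive Lie algebra sheaves that the only candidate sheaf is the full divergence-free one, is a statement about formal jets; it does not show that every compactly supported divergence-free field of zero flux is a \emph{finite} sum of iterated brackets of $\omega$- and $\omega'$-Hamiltonian fields, and a fortiori it does not show that every zero-flux diffeomorphism is a finite word in $\omega$- and $\omega'$-Hamiltonian diffeomorphisms. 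That passage from the infinitesimal statement to the group is the entire hard content, and ``commutator identities of Thurston--Mather type'' do not supply it: those identities prove perfectness of groups one already controls, they do not produce elements of $G$. The proof sketched in the paper avoids bracket generation altogether: linear algebra is used to produce finitely many $\psi_1, \ldots, \psi_N \in G$ so that every exact vector field decomposes \emph{uniquely, as a vector-space direct sum}, into $X = X_1 + \ldots + X_N$ with $X_k$ Hamiltonian for $\psi_k^*\omega$; this gives a right inverse to the linearization of $D(\varphi_1,\ldots,\varphi_N) = \psi_1 \circ \varphi_1 \circ \ldots \circ \psi_N \circ \varphi_N$, and Gromov's implicit function theorem (a Nash--Moser scheme, needed because the right inverse loses derivatives) then shows the image of $D$ contains a $C^\infty$-neighborhood of the identity among exact diffeomorphisms, which generates the zero-flux subgroup. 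Your proposal contains no replacement for this analytic step, and you essentially concede as much in your closing paragraph.

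A secondary, fixable, issue: your ``ball case'' is established only for the one Darboux ball $B$ centered at the special point $p$ where $\omega'_p \neq \pm\omega_p$; since $\psi$ is compactly supported, $\omega' = \omega$ outside a compact set, so on most balls $B_j$ of your cover the argument yields nothing beyond the $\omega$-Hamiltonian fields. The fragmentation step needs the statement for every $B_j$. This can be repaired by conjugating with a Hamiltonian diffeomorphism carrying $B_j$ into $B$ (using that $\phi_* Y$ is again divergence free and of zero flux for $\phi \in \Ham (W,\omega)$), but you should say so explicitly.
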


We do not recall the flux map in detail, but note that it is a homomorphism from the connected component of the identity of $\Diff (W,\omega^n)$ into a quotient of the group $H^{2 n - 1} (W,\R)$ by a discrete subgroup.
Its kernel is comprised of those diffeomorphisms that are generated by (time-dependent) exact vector fields $X$, i.e.\ such that the $(2 n - 1)$-form $\iota_{X_t} \omega^n$ is exact for all times $t$.
In particular, the time-one map of a divergence-free vector field that is supported in a coordinate chart has vanishing flux.
The flux of a Hamiltonian diffeomorphism is also equal to zero.
See \cite{banyaga:scd97} for details.

The easier part of the proof uses linear algebra to prove that there are enough diffeomorphisms $\psi_1, \ldots, \psi_N \in G$ so that every exact vector field $X$ can be decomposed uniquely into a sum $X = X_1 + \ldots + X_N$, where the vector field $X_k$ is Hamiltonian with respect to the symplectic form $\psi_k^* \omega$.
Then the linearization of the map $D \colon \Symp (W,\omega) \times \ldots \times \Symp (W,\omega) \to \Diff (W,\omega^n)$, $(\varphi_1, \ldots, \varphi_N) \mapsto \psi_1 \circ \varphi_1 \circ \ldots \circ \psi_N \circ \varphi_N$, is surjective near the identity, and also has a right inverse.
The same techniques used to prove Gromov's difficult implicit function theorem then show that the image of $D$ contains an open neighborhood of the identity in the subgroup of all diffeomorphisms that are generated by (time-dependent) exact vector fields.
But this open neighborhood generates the entire subgroup of diffeomorphisms with zero flux, hence the proof.
The assumption that all vector fields are compactly supported in the interior guarantees that their flows are well-defined and exist for all times.

In order to apply Gromov's maximality theorem in the proof of Theorem~\ref{thm:symp-rigidity}, we need the following lemma.

\begin{lem} \label{lem:no-anti-symp}
Let $(W^{2 n},\omega)$ be a connected symplectic manifold.
Then the $C^0$-closure of the group $\Symp (W,\omega)$ of symplectic diffeomorphisms in the group $\Diff (W)$ of all diffeomorphisms of $W$ does not contain any anti-symplectic diffeomorphisms.
\end{lem}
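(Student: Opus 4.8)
The plan is to argue by contradiction. Suppose $\psi \in \Diff (W)$ is anti-symplectic, so $\psi^* \omega = - \omega$, and suppose $\psi = \lim_k \psi_k$ uniformly with $\psi_k \in \Symp (W,\omega)$. I would show this is impossible, splitting the argument according to whether or not $W$ is a closed manifold.

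If $W$ is not closed (noncompact, or with nonempty boundary), then under the standing conventions of the paper there is no anti-symplectic diffeomorphism at all: since $\psi$ is compactly supported in $\mathrm{int}\, W$, it equals the identity on some nonempty open set $U \subset W$, and there $-\omega|_U = \psi^* \omega|_U = \omega|_U$ forces $\omega|_U = 0$, contradicting the nondegeneracy of $\omega$. Hence in this case the $C^0$-closure of $\Symp (W,\omega)$, being a subset of $\Diff (W)$, contains no anti-symplectic element, and we are done without ever using the sequence $\psi_k$.

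It remains to treat the case that $W$ is closed, where I would invoke homotopy invariance of de Rham cohomology. For $k$ large $\psi_k$ is uniformly close to $\psi$, and uniformly close self-maps of a closed manifold are homotopic (connect $\psi_k (x)$ to $\psi (x)$ along the short geodesic between them, using the positive injectivity radius); hence $\psi_k^* = \psi^*$ on $H^* (W;\R)$. Since $\psi_k$ is symplectic this gives $[\omega] = \psi_k^* [\omega] = \psi^* [\omega] = [- \omega] = - [\omega]$, so $[\omega] = 0$ in $H^2 (W;\R)$. But then $\omega = d \lambda$ is exact, and Stokes' theorem yields $\int_W \omega^n = \int_W d (\lambda \wedge \omega^{n - 1}) = 0$, contradicting that $\omega^n$ is a volume form on the closed manifold $W$.

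The argument is essentially formal: the only nonroutine ingredient is the homotopy invariance step in the closed case, and even that is standard. The point to be careful about is simply not to overlook the closed case, since the naive ``complement of the support'' reasoning disposes of everything else. As a side remark, the orientation-reversing subcase (that is, $n$ odd, where $\psi^* \omega^n = - \omega^n$) could alternatively be handled via the mapping degree, which is a $C^0$-invariant; but the cohomological argument above is uniform in $n$ and also covers the more delicate even-dimensional closed case.
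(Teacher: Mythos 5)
Your proposal is correct and follows essentially the same route as the paper: the non-closed case is disposed of by the compact-support convention, and the closed case uses that $C^0$-close maps act identically on $H^*(W;\R)$ (you construct the homotopy by short geodesics where the paper cites local contractibility of the homeomorphism group) together with $[\omega]\neq 0$, which you helpfully make explicit via $\int_W\omega^n\neq 0$.
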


\begin{proof}
Suppose that $W$ is not closed, and $\psi^* \omega = \pm \omega$ for a diffeomorphism $\psi$.
Since the sign on the right-hand side is independent of $x \in W$, and every diffeomorphism is equal to the identity outside a compact subset of the interior of $W$, $\psi$ cannot be anti-symplectic.

Next suppose that $W$ is closed.
Since the group of homeomorphisms of $W$ is locally contractible with respect to the $C^0$-topology, any two homeomorphisms that are sufficiently $C^0$-close induce the same action on the real cohomology groups of $W$.
In particular, the closure of $\Symp (W,\omega)$ in $\Diff (W)$ fixes the non-zero cohomology class $[\omega]$, and thus it does not contain any anti-symplectic diffeomorphisms.

For an alternate proof for closed symplectic manifolds, recall that the $C^0$-closure of $\Symp (W,\omega)$ is contained in the group $\Diff (W,\omega^n)$ of volume preserving diffeomorphisms.
But if $\psi^* \omega = - \omega$, then $\psi^* (\omega^n) = (- 1)^n \omega^n$, and thus if $n$ is odd, the group $\Diff (W,\omega^n)$ does not contain any anti-symplectic diffeomorphisms.
Then one proceeds to first proving Theorem~\ref{thm:symp-rigidity} for $n$ odd, and proves Theorem~\ref{thm:symp-rigidity} for even $n$ afterward.

If $n$ is even, one applies the above argument to a product manifold $(W \times \Sigma, \omega \times \sigma)$, where $\Sigma$ is a closed surface with an area form $\sigma$.
Assume a diffeomorphisms $\psi$ is contained in the $C^0$-closure of $\Symp (W,\omega)$, and let $\psi_k \in \Symp (W,\omega)$ be a sequence of symplectic diffeomorphisms that converges uniformly to $\psi$.
Since $n + 1$ is odd, $\Symp (W \times \Sigma, \omega \times \sigma)$ is $C^0$-closed in $\Diff (W \times \Sigma)$ by what we have already proved, and in particular the $C^0$-limit $\psi \times \id$ of the sequence $\psi_k \times \id$ lies in $\Symp (W \times \Sigma, \omega \times \sigma)$.
But then $\psi$ must preserve $\omega$.
\end{proof}

For $W = \R^{2 n}$, one obtains the common formulation of Gromov's alternative.

\begin{lem}
The group $\Symp (\R^{2 n}, \omega_0)$ of symplectic diffeomorphisms is either $C^0$-closed in the group of all diffeomorphisms of $\R^{2 n}$, or its $C^0$-closure is the full group $\Diff (\R^{2 n}, \omega_0^n)$ of volume preserving diffeomorphisms.
\end{lem}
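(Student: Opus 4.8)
The plan is to deduce this statement from Gromov's maximality theorem together with Lemma~\ref{lem:no-anti-symp}, in the spirit of the standard derivation of Theorem~\ref{thm:symp-rigidity}. Let $G$ denote the $C^0$-closure of $\Symp (\R^{2 n},\omega_0)$ in $\Diff (\R^{2 n})$. First I would record that $G$ is a subgroup of $\Diff (\R^{2 n})$: composition and inversion are continuous in the $C^0$-topology, so the closure of a subgroup is again a subgroup. Next I would observe that $G$ is contained in $\Diff (\R^{2 n},\omega_0^n)$, since each $\varphi_k \in \Symp (\R^{2 n},\omega_0)$ preserves the volume form $\omega_0^n$, and the condition $\varphi^* (\omega_0^n) = \omega_0^n$ is manifestly closed under $C^0$-limits (it can be tested against smooth compactly supported $2n$-forms, or simply by the fact that $\varphi \mapsto \varphi^*(\omega_0^n)$ is $C^0$-continuous into distributional densities). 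Since $G$ visibly contains every Hamiltonian diffeomorphism of $(\R^{2 n},\omega_0)$ — indeed $\Ham \subset \Symp \subset G$ — the group $G$ satisfies the structural hypotheses of Gromov's maximality theorem, save possibly for the existence of a non-(anti-)symplectic element.

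The dichotomy now comes from asking whether $G$ contains an element $\psi$ with $\psi^* \omega_0 \neq \pm \omega_0$. If no such element exists, then every $\psi \in G$ satisfies $\psi^* \omega_0 = \pm \omega_0$; by Lemma~\ref{lem:no-anti-symp} the minus sign never occurs, so $\psi^* \omega_0 = \omega_0$ for all $\psi \in G$, i.e.\ $G = \Symp (\R^{2 n},\omega_0)$ and the group is $C^0$-closed. If, on the other hand, $G$ does contain such a $\psi$, then Gromov's maximality theorem applies and shows that $G$ contains the subgroup of volume preserving diffeomorphisms of $\R^{2 n}$ with zero flux. It remains to upgrade "zero flux" to "all of $\Diff (\R^{2 n},\omega_0^n)$", and here the point is that $\R^{2 n}$ is contractible: $H^{2 n - 1} (\R^{2 n};\R) = 0$, so the flux homomorphism on the identity component of $\Diff (\R^{2 n},\omega_0^n)$ is identically zero, and moreover (since all diffeomorphisms here are compactly supported and $\R^{2n}$ is connected and simply connected) $\Diff(\R^{2n},\omega_0^n)$ is connected; hence every volume preserving diffeomorphism already has zero flux. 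Combined with the reverse inclusion $G \subseteq \Diff (\R^{2 n},\omega_0^n)$ noted above, this gives $G = \Diff (\R^{2 n},\omega_0^n)$, which is the second alternative.

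I expect the main obstacle to be purely expository rather than mathematical: the statement is essentially a packaging of the maximality theorem, so the work lies in verifying the hypotheses cleanly and in making precise the flux considerations on $\R^{2n}$ — in particular the claim that the compactly supported volume preserving diffeomorphism group of $\R^{2n}$ is connected and has vanishing flux. One should be slightly careful that Gromov's maximality theorem as quoted yields the zero-flux subgroup and not a priori the identity component; on $\R^{2n}$ these coincide with the full group by the contractibility remark, but this deserves an explicit sentence. Everything else — that closures of subgroups are subgroups, that volume preservation is a $C^0$-closed condition, that $\Ham \subset G$ — is routine and can be dispatched in a line or two each.
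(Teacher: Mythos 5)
Your reduction to Gromov's maximality theorem and Lemma~\ref{lem:no-anti-symp} is the right frame, and the first alternative (if no element of the closure $G$ satisfies $\psi^*\omega_0\neq\pm\omega_0$, then Lemma~\ref{lem:no-anti-symp} forces $G=\Symp(\R^{2n},\omega_0)$) is handled correctly. The gap is in the second alternative, at the step where you upgrade ``zero flux'' to the full group: you assert that the compactly supported volume preserving diffeomorphism group of $\R^{2n}$ is connected ``since $\R^{2n}$ is connected and simply connected.'' Topological simplicity of the base implies nothing about connectivity of a compactly supported diffeomorphism group; by Cerf's theorem, $\pi_0$ of the group of compactly supported diffeomorphisms of $\R^m$ is isomorphic to the group of homotopy $(m+1)$-spheres for $m\ge 5$, which is $\Z/28$ for $m=6$, and Moser's trick shows the volume preserving subgroup has the same $\pi_0$. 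So for $2n=6$ (among other dimensions) your connectedness claim is false, and your argument only yields that $G$ contains the identity component of $\Diff(\R^{2n},\omega_0^n)$ --- note also that the flux homomorphism is only defined on that identity component, so the conclusion of the maximality theorem is a priori a subgroup of it. (A smaller quibble: for compactly supported diffeomorphisms the flux takes values in a quotient of the compactly supported cohomology $H^{2n-1}_c(\R^{2n};\R)$, which happens to vanish as well, so that part of your argument survives.)

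The paper closes exactly this gap with a dilation trick rather than a connectedness claim. Let $R_t(x)=tx$ and let $\varphi$ be any compactly supported volume preserving diffeomorphism. Each conjugate $\psi_t=R_t\circ\varphi\circ R_t^{-1}$ is volume preserving, the family $\{\psi_s\}_{s\in[t,1]}$ is a volume preserving isotopy from $\psi_t$ to $\varphi=\psi_1$, and $\psi_t\to\id$ uniformly as $t\to 0$ (its support shrinks to the origin and its displacement is $O(t)$). Hence $\varphi\circ\psi_t^{-1}$ lies in the identity component, which you have already placed inside $G$, and $\varphi\circ\psi_t^{-1}\to\varphi$ in $C^0$; since $G$ is by definition $C^0$-closed, $\varphi\in G$. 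Inserting this argument in place of your connectedness assertion repairs the proof.
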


\begin{proof}
If $\Symp (\R^{2 n},\omega_0)$ is not $C^0$-closed, then by Gromov's maximality theorem and Lemma~\ref{lem:no-anti-symp}, it contains the connected component of the identity in $\Diff (W,\omega^n)$.
But the $C^0$-closure of the latter generates the entire group.
Indeed, let $R_t$ denote the map $x \mapsto t x$, and let $\varphi$ be a compactly supported volume preserving diffeomorphism of $\R^{2 n}$.
Then for $t > 0$ sufficiently small, the time-$t$ map of the volume preserving isotopy $R_t \circ \varphi \circ R_t^{- 1}$ is supported in a small neighborhood of the origin, and thus is $C^0$-close to the identity.
\end{proof}

\begin{proof}[Proof of Theorem~\ref{thm:symp-rigidity}]
Let $U \subset W$ be the domain of a Darboux chart, and identify $U$ with an open subset of $\R^{2 n}$ with its standard symplectic structure.
For $r > 0$ sufficiently small, there exists a closed ball $B^{2 n} (r) \subset U$, and a volume preserving diffeomorphism $\varphi$ of $U$ that is the identity near the boundary, so that the image of $B^{2 n} (r)$ is contained in $Z^{2 n} (r / 4) \cap U$.
This diffeomorphism $\varphi$ can be chosen to be generated by a divergence-free vector field with support in $U$, and in particular, it can be viewed as an element of $\Diff (W,\omega^n)$.

Suppose there exists a sequence $\varphi_k$ of symplectic diffeomorphisms of $(W,\omega)$ that converges uniformly to $\varphi$.
Their restrictions to $B^{2 n} (r)$ are symplectic embeddings into $Z^{2 n} (r / 2)$ for $k$ sufficiently large.
But that contradicts Gromov's non-squeezing theorem.
Then by Gromov's maximality theorem, the $C^0$-closure of $\Symp (W,\omega)$ cannot contain any diffeomorphisms that are neither symplectic nor anti-symplectic.
Finally by Lemma~\ref{lem:no-anti-symp}, $\Symp (W,\omega)$ is $C^0$-closed inside the group $\Diff (W)$.
\end{proof}

\begin{thm}[{Gromov's maximality theorem \cite[Section~3.4.4~H]{gromov:pdr86}}] \label{thm:maximality}
Let $(M,\xi)$ be a connected contact manifold.
Suppose that a (not necessarily closed) subgroup $G \subset \Diff (M)$ contains all contact diffeomorphisms that are isotopic to the identity through an isotopy of contact diffeomorphisms, and that $G$ contains an element $\psi$ that does not preserve the (un-oriented) contact distribution $\xi$, i.e.\ $\psi_* \xi \not= \xi$.
Then $G$ contains the connected component of the identity of the group $\Diff (M)$.
\end{thm}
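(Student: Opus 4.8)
The plan is to follow the scheme of Gromov's maximality theorem in the symplectic case, with contact vector fields in place of Hamiltonian vector fields and pulled-back contact structures in place of the symplectic forms $\psi_k^* \omega$. Fix a contact form $\alpha$ with $\ker \alpha = \xi$. Recall that the contact vector fields of $(M,\xi)$ --- the infinitesimal generators of contact isotopies --- are in bijection with $C^\infty (M)$ via $H \mapsto X_H$, where $\alpha (X_H) = H$ and $X_H = H R_\alpha + W_\alpha (dH)$, with $R_\alpha$ the Reeb vector field and $W_\alpha \colon T^* M \to \xi$ the bundle map carrying $p$ to the $d\alpha|_\xi$-dual of $- p|_\xi$; in particular $W_\alpha$ is fiberwise surjective onto $\xi$ with kernel the annihilator line of $\xi$. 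For any $\theta \in \Diff (M)$ the pushforward of a contact vector field of $\xi$ is a contact vector field of $\theta_* \xi$, and $\theta \circ \varphi \circ \theta^{-1}$ is a contactomorphism of $\theta_* \xi$ isotopic to the identity whenever $\varphi \in \Diff (M,\xi)$ is. Since $G$ is a group that contains $\psi$ and every contactomorphism isotopic to the identity through a contact isotopy, it contains the entire identity component of $\Diff (M, \theta_* \xi)$ for every $\theta \in G$, hence also the time-one maps of all time-dependent contact vector fields of each such $\theta_* \xi$.

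\emph{Step 1 (linear algebra with contact structures).} Fix a compact set $K \subset M$. The claim is that there exist finitely many $\psi_1 = \id, \psi_2, \ldots, \psi_N \in G$ such that, writing $\xi_j := (\psi_j)_* \xi$ and fixing contact forms $\alpha_j$ for $\xi_j$, the first-order differential operator
\[
P \colon C^\infty (M)^N \longrightarrow \Gamma (TM), \qquad (H_1, \ldots, H_N) \longmapsto \sum_{j = 1}^N X^{\alpha_j}_{H_j},
\]
with $X^{\alpha_j}$ the contact vector field operator associated to $\alpha_j$, is surjective onto the vector fields supported in a neighborhood of $K$ and admits a continuous linear right inverse; equivalently, every such $X$ decomposes as $X = X_1 + \cdots + X_N$ with $X_j$ a contact vector field of $\xi_j$ depending linearly and continuously on $X$. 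Even though the contact vector fields of a single contact structure span the tangent space at each point --- since $H (x)$ and $dH_x$ may be prescribed freely --- the principal symbol of $X^\alpha$ at a non-zero covector $p$ has rank at most one, namely $c \mapsto c\, W_\alpha (p)$; the role of the several $\xi_j$ is to make the combined symbol $(c_1, \ldots, c_N) \mapsto \sum_j c_j W_{\alpha_j} (p)$ surjective for every $p$, so that $P$ has a pseudodifferential right inverse over a neighborhood of $K$. Now $\psi_* \xi \neq \xi$ means that the open set $U := \{ x \in M : (\psi_* \xi)_x \neq \xi_x \}$ is non-empty (and relatively compact, as $\psi$ is compactly supported), and for a contactomorphism $\varphi$ isotopic to the identity the contact structure $(\varphi \circ \psi)_* \xi$ differs from $\xi$ precisely on $\varphi (U)$; using the transitivity of $\Diff (M,\xi)$ on $M$ and the local flexibility of contactomorphisms, one can choose such $\varphi$'s so that the sets $\varphi (U)$ cover a neighborhood of $K$ with sufficient multiplicity and so that at each point the relevant $\xi_j$ have pairwise distinct annihilator lines and are otherwise in general position. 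A partition of unity subordinate to this finite cover then patches the resulting local decompositions into a global one with a continuous right inverse.

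\emph{Step 2 (Gromov's implicit function theorem).} Let $V_j$ be a neighborhood of the identity in $\Diff (M,\xi)$ consisting of contactomorphisms isotopic to the identity and supported in $\psi_j^{-1} (K')$, where $K'$ is a fixed compact neighborhood of $K$ (so that the map below takes values in diffeomorphisms supported in $K'$), and consider the smooth tame map
\[
D \colon V_1 \times \cdots \times V_N \longrightarrow \Diff (M), \qquad (\varphi_1, \ldots, \varphi_N) \longmapsto (\psi_1 \varphi_1 \psi_1^{-1}) \circ \cdots \circ (\psi_N \varphi_N \psi_N^{-1}).
\]
Then $D (\id, \ldots, \id) = \id$, and the linearization of $D$ at $(\id, \ldots, \id)$ sends $(Y_1, \ldots, Y_N)$, with each $Y_j$ a contact vector field of $\xi$, to $\sum_j (\psi_j)_* Y_j$; by Step 1 it is surjective onto the vector fields supported in a neighborhood of $K$ and has a continuous linear right inverse. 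Carrying over the techniques that prove Gromov's difficult implicit function theorem, one concludes that the image of $D$ contains an open neighborhood of the identity in the group of diffeomorphisms supported in $K'$. Since every factor $\psi_j \varphi_j \psi_j^{-1}$ lies in $G$, so does this neighborhood.

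\emph{Step 3 (conclusion).} An open neighborhood of the identity generates the identity component of a connected topological group, so $G$ contains the identity component of the group of diffeomorphisms supported in $K'$; letting $K$ (and hence $K'$) exhaust $M$, it follows that $G$ contains the connected component of the identity of $\Diff (M)$. The main obstacle is Step 1: whereas the analogous linear algebra in the symplectic case is immediate, here one must combine the pointwise spanning property of contact vector fields with the transitivity and flexibility of the contactomorphism group to place the contact structures $\xi_j = (\psi_j)_* \xi$ in sufficiently general position before patching --- the nonlinear Step 2, by contrast, goes through essentially as in Gromov's treatment.
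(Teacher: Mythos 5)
Your proposal follows essentially the same route as the paper, which itself only sketches this theorem with a citation to Gromov: decompose an arbitrary vector field into contact vector fields for finitely many pushed-forward contact structures $(\psi_j)_*\xi$, apply the techniques of Gromov's implicit function theorem to the composition map $D$, and let the resulting neighborhood of the identity generate the identity component. The only divergences are cosmetic (your conjugated form of $D$) or a matter of mechanism in Step~1, where the paper asserts a unique pointwise linear-algebraic decomposition while you derive a right inverse from surjectivity of the principal symbol (which by itself only controls the image up to finite codimension); both versions defer the substance of this step to Gromov's argument.
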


Again the first step in the proof uses linear algebra to show that there are enough diffeomorphisms $\psi_1, \ldots, \psi_N \in G$ so that every vector field $X$ can be decomposed uniquely into a sum $X = X_1 + \ldots + X_N$, where the vector field $X_k$ is now contact with respect to the contact structure $(\psi_k)_* \xi$.
The linearization of the analogous map $D \colon \Diff (M,\xi) \times \ldots \times \Diff (M,\xi) \to \Diff (M)$, $(\varphi_1, \ldots, \varphi_N) \mapsto \psi_1 \circ \varphi_1 \circ \ldots \circ \psi_N \circ \varphi_N$, is again surjective near the identity, and has a right inverse.
The techniques of Gromov's implicit function theorem imply that the image of $D$ contains an open neighborhood of the identity in the subgroup of all diffeomorphisms generated by (time-dependent) vector fields, and this neighborhood again generates the entire connected component of the identity in the group of all diffeomorphisms of $M$.

\begin{lem} \label{lem:no-anti-contact}
Let $(M,\xi)$ be a connected contact manifold.
The $C^0$-closure of the group $\Diff (M,\xi)$ of contact diffeomorphisms in the group $\Diff (M)$ does not contain any diffeomorphisms that reverse the coorientation of $\xi$.
\end{lem}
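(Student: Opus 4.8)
The plan is to mirror the proof of Lemma~\ref{lem:no-anti-symp}, with coorientation-reversing contact diffeomorphisms in the role of anti-symplectic diffeomorphisms and the contact volume form $\alpha \wedge (d\alpha)^{n-1}$ in the role of $\omega^n$. The starting point is that a diffeomorphism $\psi$ with $\psi_* \xi = \xi$ satisfies $\psi^* \alpha = h\, \alpha$ for a nowhere-vanishing function $h$; since $M$ is connected, $h$ has constant sign, and $\psi$ reverses the coorientation exactly when $h < 0$. A short computation with $\psi^* d\alpha = d(h\alpha)$ gives $\psi^* (\alpha \wedge (d\alpha)^{n-1}) = h^n\, \alpha \wedge (d\alpha)^{n-1}$. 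If $M$ is not closed, then $\psi$ equals the identity outside a compact subset of the interior, so $h \equiv 1$ there and hence $h > 0$ everywhere; thus $M$ carries no coorientation-reversing diffeomorphism at all, and in particular its $C^0$-closure contains none.

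Assume now that $M$ is closed, and orient it by $\alpha \wedge (d\alpha)^{n-1}$. Every $\varphi \in \Diff (M,\xi)$ has $\varphi^* \alpha = e^{g} \alpha$, so $\varphi^* (\alpha \wedge (d\alpha)^{n-1}) = e^{n g}\, \alpha \wedge (d\alpha)^{n-1}$ and $\varphi$ is orientation-preserving, i.e.\ $\deg \varphi = 1$. Since two $C^0$-close maps of a closed manifold are homotopic and the degree is a homotopy invariant, every diffeomorphism lying in the $C^0$-closure of $\Diff (M,\xi)$ also has degree $1$, hence is orientation-preserving. When $n$ is odd this already finishes the proof: a coorientation-reversing diffeomorphism has $h < 0$, so $h^n < 0$, so by the formula above it reverses the orientation, contradicting $\deg = 1$.

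The case in which $M$ is closed and $n$ is even is the main obstacle, because there $h^n > 0$ and the ambient orientation detects nothing. As in the symplectic setting, I would reduce it to the odd case by stabilization: replace $(M^{2n-1},\xi)$ by an auxiliary \emph{closed} contact manifold $(N^{2n+1},\eta)$ built from it---for instance a Bourgeois contact structure on $M \times T^2$, or a suitable contact ``product'' of $M$ with a closed surface---so that the relevant half-dimension becomes $n+1$, which is odd, and then invoke the case already established for $N$. This requires that a coorientation-preserving (respectively coorientation-reversing) contact diffeomorphism of $M$, and likewise a $C^0$-limit of such, induce a coorientation-preserving (respectively coorientation-reversing) contact diffeomorphism of $N$. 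The two points where I expect the real difficulty to lie are: (i) keeping the induced diffeomorphisms of $N$ compactly supported; and (ii) producing a contact lift $\varphi \mapsto \hat\varphi$ whose dependence on $\varphi$ is $C^0$-continuous---the naive product lift $\varphi \times \id$ is not even contact, while the natural corrections involve the conformal factor $e^{g}$, which for the approximating sequence $\varphi_k$ is not known to converge.
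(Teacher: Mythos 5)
Your treatment of the open case and of the closed case with $n$ odd coincides with the paper's argument and is correct: the conformal factor $h$ of a $\xi$-preserving diffeomorphism is nowhere zero, hence of constant sign on the connected manifold $M$; the identity $\psi^* \bigl( \alpha \wedge (d\alpha)^{n-1} \bigr) = h^n \, \alpha \wedge (d\alpha)^{n-1}$ shows that for $n$ odd a coorientation-reversing diffeomorphism reverses the orientation determined by $\alpha \wedge (d\alpha)^{n-1}$ (which is independent of the choice of $\alpha$); and orientation-preservation passes to $C^0$-limits on a closed manifold. In the non-closed case the compact support assumption forces $h > 0$, exactly as in Lemma~\ref{lem:no-anti-symp}.

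The closed case with $n$ even is where your proof stops, and that is a genuine gap. You do name precisely the tool the paper invokes, namely Bourgeois's theorem that $M \times T^2$ carries a contact structure $\eta$ for which each slice $M \times p$ is a contact submanifold contact diffeomorphic to $(M,\xi)$; the intended logic is to first prove Theorem~\ref{thm:contact-rigidity} for odd $n$ (for which the present lemma is already available) and then apply it to $(M \times T^2, \eta)$, whose relevant half-dimension $n+1$ is odd, mimicking the product trick in Lemma~\ref{lem:no-anti-symp}. The obstacle you flag --- that $\varphi \times \id$ is not a contact diffeomorphism of the Bourgeois structure, whereas in the symplectic setting $\psi_k \times \id$ is trivially symplectic for $\omega \times \sigma$ --- is exactly the point at which the contact argument is more delicate, and it is the step the paper itself leaves to the reader with the phrase ``by the same argument as in the symplectic case.'' As it stands, your proposal establishes the lemma only for $M$ open or for $M$ closed with $n$ odd; to complete it you must either construct a $C^0$-convergent sequence of genuine contact lifts to $(M \times T^2, \eta)$ (which requires some control absent from the naive product, as you observe), or find a substitute argument for even $n$, for instance one exploiting how an invariant such as the contact shape transforms under reversal of the coorientation. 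Until one of these is carried out, the even-dimensional closed case remains an unproved reduction rather than a proof.
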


\begin{proof}
If $M$ is not closed, the argument is the same as in the symplectic case.

If $M$ is closed, one may use the fact that every element of $\Diff (M,\xi)$ preserves the orientation induced by the volume form $\alpha \wedge (d\alpha)^{n - 1}$ (which is independent of the choice of contact form $\alpha$ with $\ker \alpha = \xi$), and that this property is preserved under $C^0$-limits.
If $n$ is odd, it follows at once that the $C^0$-closure of $\Diff (M,\xi)$ does not contain any diffeomorphisms that reverse the coorientation of $\xi$.
Then one proceeds to proving the main theorem for $n$ odd, and again apply it in the proof for even $n$.

If $n$ is even, one needs in addition a theorem of Bourgeois \cite{bourgeois:odt02} that for a closed contact manifold $(M,\xi)$, the product $M \times T^2$ also admits a contact structure with the property that for each point $p \in T^2$, the submanifold $M \times p \subset M \times T^2$ is contact and moreover contact diffeomorphic to $(M,\xi)$.
Then one concludes the main theorem for even $n$ by the same argument as in the symplectic case.
\end{proof}

\begin{lem}
The group $\Diff (\R^{2 n - 1}, \xi_0)$ of contact diffeomorphisms is either $C^0$-closed or $C^0$-dense in the group of all diffeomorphisms of $\R^{2 n - 1}$.
\end{lem}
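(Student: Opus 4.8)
The plan is to deduce this dichotomy from the contact analog of Gromov's maximality theorem (Theorem~\ref{thm:maximality}) together with Lemma~\ref{lem:no-anti-contact}, in exact parallel with the symplectic case treated above. First I would suppose that $\Diff(\R^{2n-1},\xi_0)$ is \emph{not} $C^0$-closed in $\Diff(\R^{2n-1})$. Then its $C^0$-closure $G$ is a subgroup of $\Diff(\R^{2n-1})$ that strictly contains $\Diff(\R^{2n-1},\xi_0)$, so it contains some diffeomorphism $\psi$ with $\psi_*\xi_0 \ne \xi_0$. Here I must be slightly careful: Theorem~\ref{thm:maximality} requires an element not preserving the \emph{un-oriented} distribution, whereas a priori $G$ might only contain a $\psi$ that preserves $\xi_0$ as a distribution but reverses its coorientation. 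However, Lemma~\ref{lem:no-anti-contact} rules this out — the $C^0$-closure contains no coorientation-reversing diffeomorphism — so any $\psi \in G \setminus \Diff(\R^{2n-1},\xi_0)$ genuinely satisfies $\psi_*\xi_0 \ne \xi_0$ as unoriented distributions. Since $G$ also contains all contact diffeomorphisms, in particular all those isotopic to the identity through a contact isotopy, Theorem~\ref{thm:maximality} applies and shows that $G$ contains the entire connected component of the identity in $\Diff(\R^{2n-1})$.

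It then remains to upgrade ``$G$ contains the identity component'' to ``$G$ is $C^0$-dense in $\Diff(\R^{2n-1})$.'' For this I would argue as in the symplectic lemma above. First, $\Diff(\R^{2n-1})$ is connected: every compactly supported diffeomorphism of Euclidean space is isotopic to the identity (Alexander's trick / the contraction $x \mapsto tx$ conjugation argument, using that the support can be shrunk into an arbitrarily small ball and then that $\Diff$ of a ball fixing the boundary is contractible). Hence the identity component is all of $\Diff(\R^{2n-1})$, and in fact $G = \Diff(\R^{2n-1})$ on the nose. Since the $C^0$-closure of $G$ equals $G$ by definition of $G$ as a closure, we conclude that the $C^0$-closure of $\Diff(\R^{2n-1},\xi_0)$ is all of $\Diff(\R^{2n-1})$, i.e.\ $\Diff(\R^{2n-1},\xi_0)$ is $C^0$-dense. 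Combining the two cases gives the stated dichotomy.

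The only genuine subtlety is the one flagged above: Theorem~\ref{thm:maximality} is stated for the un-oriented distribution, so one must invoke Lemma~\ref{lem:no-anti-contact} to exclude the possibility that the ``extra'' element of the closure is merely coorientation-reversing. Everything else is formal: the connectedness of $\Diff(\R^{2n-1})$ (equivalently, that the identity component exhausts it) is standard, and the passage from ``contains the identity component'' to ``equals the full group'' is then immediate. I do not expect any analytic difficulty here — the hard analysis is entirely packaged inside Theorem~\ref{thm:maximality} — so the proof is short and structural, mirroring verbatim the symplectic statement and its proof given earlier in this section.
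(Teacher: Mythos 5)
Your first step---combining Theorem~\ref{thm:maximality} with Lemma~\ref{lem:no-anti-contact} to rule out the possibility that the extra element of the closure merely reverses the coorientation---is exactly the paper's route, and that part is correct. The gap is in your second step. You assert that $\Diff_c(\R^{2n-1})$ is connected, citing the Alexander trick and the contractibility of the group of diffeomorphisms of a ball fixing the boundary. Neither holds in the smooth category in high dimensions: the Alexander cone construction is not differentiable at the cone point, and by Cerf's theorem $\pi_0 \Diff_c(\R^m) \cong \pi_0 \Diff(D^m \ \mathrm{rel}\ \partial D^m) \cong \Theta_{m+1}$ for $m \ge 5$, the group of homotopy $(m+1)$-spheres. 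This is nontrivial, for instance, when $m = 2n-1 = 7$, where $\Theta_8 \cong \Z/2\Z$. So the identity component need not exhaust $\Diff(\R^{2n-1})$, and your conclusion that $G$ equals the full group ``on the nose'' does not follow.

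The repair is exactly the rescaling argument the paper uses (and which you gesture at but then abandon in favor of the false contractibility claim): one does not need the identity component to be all of $\Diff(\R^{2n-1})$, only to be $C^0$-dense in it. Let $S_t(x,y,z) = (tx, ty, t^2 z)$; the paper points out that $S_t$ preserves $\xi_0$, so the argument runs parallel to the symplectic case (in fact, since Theorem~\ref{thm:maximality} already delivers the full identity component of $\Diff(\R^{2n-1})$ rather than of a structure-preserving subgroup, any linear contraction would serve here). Given a compactly supported $\varphi$, the path $s \mapsto S_s \circ \varphi \circ S_s^{-1}$, $s \in [t,1]$, is a compactly supported isotopy from $\varphi$ to a diffeomorphism supported in an arbitrarily small neighborhood of the origin, hence arbitrarily $C^0$-close to the identity. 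Therefore $\varphi^{-1} \circ S_t \circ \varphi \circ S_t^{-1}$ lies in the identity component, hence in $G$, and converges uniformly to $\varphi^{-1}$ as $t \to 0$; since $G$ is $C^0$-closed, $\varphi \in G$. This yields density in the whole group without any claim about $\pi_0$ of the diffeomorphism group.
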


\begin{proof}
The diffeomorphism $(x, y, z) \mapsto (t x, t y, t^2 z)$ preserves the standard contact structure $\xi_0$, and thus the proof is similar to the symplectic case.
\end{proof}

In the case of $\R^{2 n} \times S^1$ with its standard contact structure, it is possible to use non-squeezing phenomena \cite{eliashberg:gct06} similar to Theorem~\ref{thm:non-squeezing} to decide Gromov's alternative in favor of rigidity.
However, the domain of a Darboux chart in $\R^{2 n} \times S^1$ does not contain a subset of the form $U \times S^1$ for an open set $U \subset \R^{2 n}$, so that this proof does not generalize to arbitrary contact manifolds.
If $M$ is a three-manifold, then there are non-squeezing phenomena (\cite[Section~3.4.4~F]{gromov:pdr86} or \cite{eliashberg:gct06}) that give rise to an alternate proof of $C^0$-rigidity of contact diffeomorphisms.

\section{The contact shape} \label{sec:contact-shape}
In this section we review the contact shape defined and studied in \cite{eliashberg:nio91}.
We only present the elements of this invariant that are necessary to prove contact rigidity in the next section, and refer the reader to the original paper for a more general discussion.

Let $T^n = \R^n / \Z^n$ be an $n$-dimensional torus, and $T^* T^n = T^n \times \R^n$ its cotangent bundle with the canonical symplectic structure $\omega = d \lambda$, where $\lambda = \sum p_i dq_i$, and where $(q_1, \ldots, q_n)$ and $(p_1, \ldots, p_n)$ denote coordinates on the base $T^n = \R^n / \Z^n$ and the fiber $\R^n$, respectively.
An embedding $f \colon T^n \hookrightarrow U$ into an open subset $U \subset T^* T^n$ is called Lagrangian if $f^* \omega = 0$, and the cohomology class $\lambda_f = [f^* \lambda]$ in $H^1(T^n,\R)$ is called its $\lambda$-period.
Choose a homomorphism $\tau \colon H^1 (U,\R) \to H^1 (T^n,\R)$.
The $\tau$-shape of $U$ is the subset $I (U, \tau)$ of $H^1 (T^n,\R)$ that consists of all points $z \in H^1 (T^n,\R)$ such that there exists a Lagrangian embedding $f \colon T^n \hookrightarrow U$ with $f^* = \tau$ and $z = \lambda_f$.

Let $(M,\xi = \ker \alpha)$ be a connected contact manifold, and denote by $M \times \R_+$ the symplectization of $(M,\alpha)$ with the symplectic structure $\omega = d(t p^* \alpha)$, where $p \colon M \times \R_+ \to M$ is the projection to the first factor, and $t$ is the coordinate on the factor $\R_+ = (0,\infty)$.
(The symplectization can also be defined as the manifold $M \times \R$ with coordinate $\theta$ on the second factor and the change of coordinates $t = e^\theta$.)
Up to symplectic diffeomorphisms, the symplectization depends only on the contact structure $\xi$, and not on the particular choice of contact form $\alpha$.
A contact embedding $\varphi \colon (M_1,\xi_1 = \ker \alpha_1) \to (M_2,\xi_2 = \ker \alpha_2)$ with $\varphi^* \alpha_2 = f \alpha_1$ induces an ($\R_+$-equivariant) symplectic embedding $(M_1 \times \R_+, d(t p^* \alpha_1)) \to (M_2 \times \R_+, d(t p^* \alpha_2))$ given by $(x,t) \mapsto (\varphi (x),t / f (x))$.

Let $ST^* T^n \subset T^* T^n$ denote the unit cotangent bundle of $T^n$ with its canonical contact structure $\xi = \ker \alpha$, where $\alpha$ is the restriction of the canonical $1$-form $\lambda$ on $T^* T^n$.
The symplectization of $ST^* T^n$ is diffeomorphic to $T^* T^n$ minus the zero section with its standard symplectic structure $\omega$.
Denote by $U$ an open subset of $ST^* T^n$.
Since $H^1 (U \times \R_+,\R)$ is canonically isomorphic to $H^1 (U,\R)$, a given homomorphism $\tau \colon H^1 (U,\R) \to H^1 (T^n ,\R)$ can be identified with a homomorphism $H^1 (U \times \R_+,\R) \to H^1 (T^n,\R)$.
Thus the symplectic shape $I (U \times \R_+, \tau)$ of the symplectization is a contact invariant of the contact manifold $(U,\xi)$.
It is easy to see from the definition that $I (U \times \R_+, \tau)$ is a cone without the vertex in $H^1 (T^n,\R)$.
Thus it is convenient to projectivize (in the oriented sense of identifying vectors that differ by a positive scalar factor) the invariant, and define the contact ($\tau$-)shape of $U$ by
	\[ I_C (U, \tau) = PI (U \times \R_+, \tau) = I (U \times \R_+, \tau) / \R_+ \subset H^1 (T^n,\R) / \R_+ = PH^1 (T^n,\R). \]

\begin{pro}[{\cite[Proposition~3.2.2]{eliashberg:nio91}}] \label{pro:contact-inv}
Let $U_1$ and $U_2$ be two open subsets of the unit cotangent bundle $ST^* T^n$ with its canonical contact structure, and $\varphi \colon U_1 \to U_2$ be a contact embedding.
Then $I_C (U_1, \tau) \subset I_C (U_2, \tau \circ \varphi^*)$.
In particular, if $\varphi$ is a contact diffeomorphism, then $I_C (U_1, \tau) = I_C (U_2, \tau \circ \varphi^*)$.
\end{pro}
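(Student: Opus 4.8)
The plan is to reduce the claim to the functoriality of Eliashberg's \emph{symplectic} shape applied to the symplectizations $U_1\times\R_+$ and $U_2\times\R_+$, identified as usual with open subsets of $T^*T^n$ minus the zero section. Write $\varphi^*\alpha = f\alpha$ with $f>0$, and let $\Phi\colon U_1\times\R_+\to U_2\times\R_+$, $(x,t)\mapsto(\varphi(x),t/f(x))$, be the induced symplectic embedding recalled above. The first point to pin down is that $\Phi$ preserves not merely $\omega$ but the tautological $1$-form itself: the computation
\[ \Phi^*(t\,p^*\alpha) \;=\; \frac{t}{f\circ p}\,(\varphi\circ p)^*\alpha \;=\; \frac{t}{f\circ p}\,p^*(f\alpha) \;=\; t\,p^*\alpha \]
shows this, since under the standard identification the primitive $t\,p^*\alpha$ of the symplectization corresponds to the canonical $1$-form $\lambda = \sum p_i\,dq_i$ on $T^*T^n$. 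Since moreover $p\circ\Phi = \varphi\circ p$, the map induced by $\Phi$ on first cohomology corresponds to $\varphi^*$ under the canonical isomorphisms $H^1(U_i\times\R_+,\R)\cong H^1(U_i,\R)$.

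With this in hand I would prove the inclusion at the level of the unprojectivized symplectic shapes. Let $z\in I(U_1\times\R_+,\tau)$ and choose a Lagrangian embedding $h\colon T^n\hookrightarrow U_1\times\R_+$ with $h^* = \tau$ and $\lambda$-period $\lambda_h = z$. Put $g = \Phi\circ h\colon T^n\hookrightarrow U_2\times\R_+$. Then $g^*\omega = h^*\Phi^*\omega = h^*\omega = 0$, so $g$ is Lagrangian; $g^* = h^*\circ\Phi^* = \tau\circ\varphi^*$ on first cohomology; and $\lambda_g = [g^*\lambda] = [h^*\Phi^*\lambda] = [h^*\lambda] = z$. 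Hence $z\in I(U_2\times\R_+,\tau\circ\varphi^*)$, i.e.\ $I(U_1\times\R_+,\tau)\subset I(U_2\times\R_+,\tau\circ\varphi^*)$. Both sides are $\R_+$-invariant cones, so passing to projectivizations gives $I_C(U_1,\tau)\subset I_C(U_2,\tau\circ\varphi^*)$.

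For the final assertion, let $\varphi$ be a contact diffeomorphism. Applying the inclusion just established to $\varphi^{-1}\colon U_2\to U_1$ and the homomorphism $\tau\circ\varphi^*$ gives
\[ I_C(U_2,\tau\circ\varphi^*)\;\subset\; I_C\bigl(U_1,\,\tau\circ\varphi^*\circ(\varphi^{-1})^*\bigr)\;=\;I_C(U_1,\tau), \]
since $\varphi^*\circ(\varphi^{-1})^* = \id$ on $H^1(U_1,\R)$; together with the forward inclusion this yields equality. I expect the only step needing genuine care to be the cohomological bookkeeping through the identifications $H^1(U_i\times\R_+,\R)\cong H^1(U_i,\R)$, together with the observation that the induced map $\Phi$ preserves the tautological $1$-form (which is exactly what makes it symplectic); once these are recorded, the rest of the argument is formal.
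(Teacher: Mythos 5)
Your argument is correct and is exactly the intended one: the paper itself only cites Eliashberg for this proposition, but the setup it records (the $\R_+$-equivariant symplectic embedding $(x,t)\mapsto(\varphi(x),t/f(x))$ of symplectizations and the definition of $I_C$ as the projectivized symplectic shape) is precisely what you use, and your key observation that this embedding preserves the primitive $t\,p^*\alpha$ (hence $\lambda$-periods, with no cohomological correction term) is the crux. The cohomological bookkeeping and the deduction of equality from the two inclusions for $\varphi$ and $\varphi^{-1}$ are both handled correctly.
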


The above decomposition of $T^* T^n = T^n \times \R^n$ restricts to the decomposition $ST^* T^n = T^n \times S^{n - 1}$.
Choose cohomology classes $[dq_1], \ldots, [dq_n]$ as a basis of $H^1 (T^n,\R)$ to identify it with the fiber $\R^n$ of the fibration $T^* T^n \to T^n$.
This gives rise to an identification of the (oriented) projectivized group $PH^1 (T^n,\R)$ with the fiber $S^{n -1}$ of the fibration $ST^* T^n = T^n \times S^{n - 1} \to T^n$.

\begin{pro}[{\cite[Proposition~3.4.1]{eliashberg:nio91}}] \label{pro:a-shape}
Let $A \subset S^{n - 1}$ be a connected open subset, and $U = T^n \times A \subset ST^* T^n$.
For $a \in A$, denote by $\iota_a \colon T^n = T^n \times a \hookrightarrow ST^* T^n$ the canonical embedding.
Then $I_C (U, \iota_a^*) = A$.
\end{pro}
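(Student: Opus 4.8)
The plan is to unwind the definition of the contact shape, reduce the claim to an equality of cones in $H^1(T^n,\R)$ concerning Lagrangian tori in $T^*T^n$, establish the inclusion $A \subseteq I_C(U,\iota_a^*)$ by exhibiting explicit Lagrangian sections, and obtain the reverse inclusion from one input of genuine symplectic rigidity. I would begin by fixing coordinates: the symplectization of $ST^*T^n$ is symplectomorphic to $T^*T^n$ with the zero section $0_{T^n}$ removed, and under this identification the symplectization $U \times \R_+$ of $U = T^n \times A$ becomes $\widetilde U = T^n \times C(A)$, where $C(A) = \R_+\cdot A \subset \R^n \setminus \{0\}$ is the open cone on $A$ and I identify the fiber $\R^n$ of $T^*T^n \to T^n$ with $H^1(T^n,\R)$ via the basis $[dq_1],\dots,[dq_n]$, so that the fiber sphere $S^{n-1}$ becomes $PH^1(T^n,\R)$ and $A$ is a subset of it. Since $A$, hence $C(A)$, is connected, the Künneth isomorphism gives $H^1(\widetilde U,\R) \cong H^1(T^n,\R) \oplus H^1(A,\R)$ under which $\tau := \iota_a^*$ is the projection onto the first summand (in particular independent of $a$), and since $I_C(U,\iota_a^*) = PI(\widetilde U,\tau)$ and the projectivization of $C(A)$ is $A$, it will suffice to show $I(\widetilde U,\tau) = C(A)$.

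For the inclusion $C(A) \subseteq I(\widetilde U,\tau)$ I would, for each $z \in C(A)$, use the constant section $f_z \colon T^n \hookrightarrow \widetilde U$, $q \mapsto (q,z)$, whose image is the slice $T^n \times \{z\} \subset \widetilde U$. Then $f_z^*\lambda = \sum_i z_i\,dq_i$ is closed, so $f_z$ is a Lagrangian embedding; its $\lambda$-period is $[f_z^*\lambda] = z$; and, being the inclusion of a slice, $f_z^*$ equals the Künneth projection $\tau$. Hence $z \in I(\widetilde U,\tau)$.

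For the reverse inclusion, let $f \colon T^n \hookrightarrow \widetilde U$ be any Lagrangian embedding with $f^* = \tau$ and put $z := [f^*\lambda] \in \R^n$; I want to show $z \in C(A)$. First, from $f^* = \tau$ and the fact that the projection $\pi \colon T^*T^n \to T^n$ induces the inclusion of the first Künneth summand on $H^1$, one gets $(\pi \circ f)^* = \id$ on $H^1(T^n,\R)$. Now let $\psi_z \colon (q,p) \mapsto (q,p-z)$ be the (symplectic) fiberwise translation, so $\psi_z^*\lambda = \lambda - \sum_i z_i\,dq_i$, and consider $L' := \psi_z(f(T^n))$, parametrized by $f' := \psi_z \circ f$. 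The cohomology computation I would run is
\[ \big[\,(f')^*\lambda\,\big] = [f^*\lambda] - f^*\big[\textstyle\sum_i z_i\,dq_i\big] = z - (\pi\circ f)^* z = z - z = 0, \]
using $[\sum_i z_i\,dq_i] = \pi^*z$ in $H^1(\widetilde U,\R)$; thus $\lambda|_{L'}$ is a closed $1$-form on $T^n \cong L'$ with vanishing periods, hence exact, so $L'$ is a closed \emph{exact} Lagrangian submanifold of $T^*T^n$. At this point I would invoke Gromov's theorem that a closed exact Lagrangian submanifold of a cotangent bundle must intersect its zero section \cite{gromov:pcs85} (for $T^*T^n$ one may also argue via generating functions): it produces a point $q$ with $\psi_z(f(q)) \in 0_{T^n}$, i.e.\ $f(q) \in T^n \times \{z\}$; since $f(q) \in \widetilde U = T^n \times C(A)$ this forces $z \in C(A)$. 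Projectivizing then gives $I_C(U,\iota_a^*) = A$.

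The hard part is the last step. Everything else is bookkeeping with the Künneth formula and the $\R_+$-equivariant identification of the symplectization, but the conclusion $z \in C(A)$ rests on the genuinely non-formal fact that the exact torus $L'$ cannot be pushed off the zero section of $T^*T^n$; this is where Gromov's pseudoholomorphic curve technology (or an equivalent) is indispensable. I would also be careful that it is precisely the hypothesis $f^* = \iota_a^*$ — not merely that $f$ is \emph{some} Lagrangian embedding into $\widetilde U$ — that makes the displayed computation yield a genuinely exact $L'$; without it $L'$ would only have $\lambda$-period $z - (\pi\circ f)^*z$, which need not vanish, and the argument would collapse. (Proposition~\ref{pro:contact-inv} is not needed for this.)
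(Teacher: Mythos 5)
Your argument is correct and is essentially the original proof: the paper does not prove this proposition but cites it from \cite{eliashberg:nio91}, where Eliashberg argues exactly as you do --- constant sections $T^n\times\{z\}$ give the inclusion $A \subseteq I_C(U,\iota_a^*)$, and the reverse inclusion follows by using $f^*=\iota_a^*$ to translate a candidate Lagrangian torus to an exact one and then invoking Gromov's theorem. The only point to state more carefully is the black box: the assertion that \emph{every} closed exact Lagrangian in an \emph{arbitrary} cotangent bundle meets the zero section is far deeper than what was available in \cite{gromov:pcs85}; what Gromov proves, and what Eliashberg actually cites, is the statement for $T^*T^n$ (for tori inducing the standard isomorphism on $H^1$, which your reduction supplies), and that special case is all your argument needs.
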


\section{$C^0$-rigidity of contact diffeomorphisms} \label{sec:rigidity}
In this section we construct a diffeomorphism that is an element of the connected component of the identity in the group of diffeomorphisms of $M$, but on the other hand is not contained in the $C^0$-closure of the group of contact diffeomorphisms.
Together with Gromov's maximality theorem and Lemma~\ref{lem:no-anti-contact}, this yields a proof of the main theorem.

Recall that for closed manifolds, the $C^0$-topology is given by the compact-open topology.
It is also a metric topology induced by uniform convergence with respect to some auxiliary Riemannian metric.
(This is not a complete metric, but the $C^0$-topology is also induced by a complete metric.)
For open manifolds, the $C^0$-topology is defined as a direct limit of the $C^0$-topologies on compact subsets of the interior.

As in the previous section, denote by $ST^* T^n = T^n \times S^{n - 1}$ the unit cotangent bundle of an $n$-dimensional torus with its standard contact structure induced by the contact form $\sum_{i = 1}^n a_i dt_i$, where $(a_1, \ldots, a_n)$ denote coordinates on the sphere, and $(t_1, \ldots, t_n)$ are coordinates on the torus.

\begin{lem}
Let $(M,\xi)$ be a contact manifold, and $a \in S^{n -1}$ be a point.
Then there exists a neighborhood $A$ of $a$ in $S^{n - 1}$ and a contact embedding of $T^n \times A$ with its standard contact structure into $(M,\xi)$.
\end{lem}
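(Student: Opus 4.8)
The plan is to reduce the statement to a standard Darboux-type normal form for contact manifolds near a point, and then to exhibit the required piece $T^n \times A$ sitting inside that chart. First I would recall the contact Darboux theorem: given any point $x_0 \in M$ and any contact form $\alpha$ with $\xi = \ker \alpha$, there are coordinates $(t_1, \dots, t_{n-1}, z, s_1, \dots, s_{n-1})$ centered at $x_0$ in which $\alpha$ takes the standard form $dz - \sum_{i=1}^{n-1} s_i \, dt_i$ (or, after a linear change, $dz + \sum s_i\, dt_i$); equivalently, a neighborhood of $x_0$ is contactomorphic to an open ball in the standard contact $\R^{2n-1} = J^1(\R^{n-1})$. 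So it suffices to produce, for the given $a \in S^{n-1}$, a small neighborhood $A \ni a$ together with a contact embedding of $(T^n \times A, \xi_{\mathrm{std}})$ into standard contact $\R^{2n-1}$.

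The key geometric input is the identification, already recalled in section~\ref{sec:contact-shape}, of the symplectization of $ST^*T^n = T^n \times S^{n-1}$ with $T^*T^n$ minus its zero section. Under this identification, $T^n \times A$ with its contact structure symplectizes to the open subcone $\{(q,p) : p/|p| \in A\} \subset T^*T^n \setminus 0$. So the plan is: pick a small convex neighborhood $A$ of $a$ in $S^{n-1}$ whose corresponding open cone $C_A \subset \R^n_p$ is (after an $SL(n,\Z)$ change of base coordinates on $T^n$, or simply after rescaling and an $\R$-linear change) contained in a fundamental-domain-sized box, and then shrink the radial direction so that the relevant piece of $C_A$ fits inside an arbitrarily small neighborhood of a point of $T^*T^n$. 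Concretely, $T^n \times A$ embeds contactly into a small neighborhood in $ST^*T^n$ of a point on the fiber over $0 \in T^n$ — here one uses that the fibration $ST^*T^n \to T^n$ is trivial, so a whole fiber's worth of directions near $a$, crossed with the base torus, can be compressed into a Darboux ball by an $\R_+$-equivariant symplectomorphism of the symplectization coming from scaling. Wait — the base $T^n$ is not small, so one cannot literally fit $T^n \times A$ into a coordinate ball by scaling alone; instead I would use that $ST^*T^n$ itself is, away from where it matters, locally standard, and invoke that $T^n \times A$ for small $A$ is contactly embeddable in the $1$-jet space $J^1(\R^{n-1}, \R) \cong \R^{2n-1}$ via the front-projection / generating-function picture: the conormal-type family of Lagrangians realizing nearby $\lambda$-periods lifts to Legendrians in a single Darboux chart.

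The cleanest route, and the one I would actually write, is: (i) by the contact Darboux theorem it suffices to treat $(M,\xi) = (\R^{2n-1}, \xi_0)$; (ii) inside standard contact $\R^{2n-1}$ there is a contact embedding of an open subset $U$ of $ST^*T^n$ of the form $T^n \times A$ — this is essentially the content of the setup in \cite{eliashberg:nio91}, since Proposition~\ref{pro:a-shape} is only non-vacuous because such sets do embed; (iii) by an $SL(n,\Z)$-automorphism of $T^n$ (which is a contactomorphism of $ST^*T^n$ acting on $S^{n-1}$ by the corresponding linear map) together with the $\R_+$-scaling, the neighborhood $A$ can be taken around any prescribed $a \in S^{n-1}$. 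The main obstacle is step (ii): making precise \emph{why} a neighborhood of the zero section's complement in $T^*T^n$ — equivalently a set $T^n \times A$ — admits a contact embedding into a Darboux ball, given that $T^n$ is not contractible. The resolution is that we do \emph{not} need the whole cotangent bundle, only the subcone over a small ball $A$; choosing $A$ inside a hemisphere, the cone $C_A$ lies in an open half-space $\{p : \langle p, v\rangle > 0\}$, and on such a half-space $T^n \times \{p : \langle p,v\rangle>0\}$ is symplectomorphic (via a fiberwise-linear change unwrapping one circle factor in the direction $v$) to an open subset of $T^{n-1} \times \R \times \R^n = T^{n-1} \times \R^{n+1}$, which after further restriction embeds into $\R^{2n}$; passing back to the symplectization, $T^n \times A$ embeds contactly into $\R^{2n-1}$, hence by (i) into $(M,\xi)$. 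I would carry out the unwrapping carefully since that is where one must check the contact (not merely smooth) condition is preserved, but everything else is routine.
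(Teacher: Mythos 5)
Your overall strategy --- reduce to a local normal form, then realize $T^n \times A$ inside it by ``unwrapping'' circle factors of the torus over a region where momentum coordinates are positive, with $GL(n,\Z)$ used to position $a$ --- is the right circle of ideas, and at bottom it is the same mechanism as the paper's proof. But the step you yourself identify as the main obstacle, step~(ii), is not closed by what you wrote. First, the appeal to Proposition~\ref{pro:a-shape} being ``non-vacuous'' is circular: that proposition computes the shape of subsets of $ST^* T^n$ and says nothing about embedding them into Darboux balls. Second, your proposed resolution via a hemisphere is too weak. Unwrapping is only available along a direction that is part of an integral basis of $H_1(T^n;\Z)$ (the map $(q_1,p_1) \mapsto \bigl(\sqrt{2 p_1} \cos q_1, \sqrt{2 p_1} \sin q_1\bigr)$ needs an honest circle subgroup), and after one such unwrapping you land in an open subset of $(\R^2 \setminus 0) \times T^* T^{n-1}$; the claim that this ``after further restriction embeds into $\R^{2n}$'' is exactly the original problem with $n$ replaced by $n-1$. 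Worse, the iteration can genuinely fail to start again: if $a$ is close to the direction $e_1$, the remaining momenta $(p_2, \dots, p_n)$ on the cone $C_A$ take both signs, so there is no second half-space to unwrap along. A single linear inequality $\langle p, v \rangle > 0$ does not suffice.

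The fix is to demand $n$ inequalities rather than one: use your $GL(n,\Z)$ move \emph{first} to place $a$ in the open positive orthant of some integral basis (always possible for $a \neq 0$: make one coordinate positive, then add large multiples of it to the others). For $A$ small enough the whole cone $C_A$ then has $p_i > 0$ for all $i$, and all $n$ circle factors unwrap simultaneously via $(q_i, p_i) \mapsto \bigl(\sqrt{2 p_i} \cos q_i, \sqrt{2 p_i} \sin q_i\bigr)$, identifying the corresponding part of $T^* T^n$ with an open subset of $(\R^2 \setminus 0)^n \subset \R^{2n} \setminus 0$, equivariantly for the $\R_+$-actions; passing to contact quotients puts $T^n \times A$ inside $S^{2n-1} \setminus \{pt\} \cong (\R^{2n-1}, \xi_0)$, and the contact rescaling $(x,y,z) \mapsto (tx, ty, t^2 z)$ then shrinks it into a Darboux ball of $(M,\xi)$. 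This is essentially the paper's explicit formula; the only difference is that the paper keeps one circle factor wrapped and uses the contact neighborhood theorem of a transverse knot, modelling on $S^1 \times \R^{2n-2}$ with the form $dz + \sum r_i^2\, d\theta_i$ restricted to $\{r_i > 0 \text{ for all } i\}$, which saves one unwrapping but is otherwise the same computation. With the positive-orthant correction and the circular appeal to Proposition~\ref{pro:a-shape} removed, your argument goes through.
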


\begin{proof}
Since contact diffeomorphisms act transitively on the underlying manifold, it suffices to prove the lemma for a point $a = (a_1, \ldots, a_n) \in S^{n -1}$ with $0 < a_i < 1$ for all $i$.
We will show that a neighborhood of $T^n \times a$ is contact diffeomorphic to an open subset of $S^1 \times \R^{2 n - 2}$ with the contact structure induced by the contact form $dz + \sum_{i = 1}^{n -1} (x_i dy_i - y_i dx_i) = dz + \sum_{i = 1}^{n -1} r_i^2 d\theta_i$, where $z \in S^1$, and $x_i = r_i \cos \theta_i$ and $y_i = r_i \sin \theta_i$ are coordinates on $\R^{2 n - 2}$.
This form is diffeomorphic to the standard contact form $dz - \sum_{i = 1}^{n -1} y_i dx_i$.
We moreover prove that a neighborhood of $S^1 \times 0$ in $S^1 \times \R^{2 n - 2}$ is contact diffeomorphic to an open subset of $(M,\xi)$.
Combining the two maps gives rise to the desired embedding.

By Darboux's theorem, $(M,\xi)$ contains an embedded transverse knot $S^1 \hookrightarrow M$.
By the contact neighborhood theorem, a sufficiently small neighborhood of this knot is contact diffeomorphic to an open neighborhood $S^1 \times U$ of $S^1 \times 0 \subset S^1 \times \R^{2 n - 2}$ with its standard contact structure.
That proves the second claim.

Let $\O$ be the open subset of $S^1 \times \R^{2 n - 2}$ on which $r_i > 0$ for all $i$, and define a function $r \colon \O \to (0,1)$ by $r = (1 + \sum_{i = 1}^{n - 1} r_i^2)^{- \frac{1}{2}}$.
The map $\O \to T^n \times S^{n - 1}$ given by
\begin{align*}
	(z, r_1, \ldots, r_{n - 1}, \theta_1, \ldots, \theta_{n -1}) & \mapsto (t_1, \ldots, t_n, a_1, \ldots, a_n) \\
	& = (z, \theta_1, \ldots, \theta_{n - 1}, r^2, (r \cdot r_1)^2, \ldots, (r \cdot r_{n - 1})^2)
\end{align*}
is a contact diffeomorphism onto the subset of $T^n \times S^{n -1}$ on which all spherical coordinates are positive.
Since $S^1 \times U \cap \O$ is non-empty, the proof is complete.
\end{proof}

\begin{proof}[Proof of Theorem~\ref{thm:contact-rigidity}]
By the lemma, the contact manifold $(M,\xi)$ contains an open subset that is diffeomorphic to $T^n \times A \subset ST^* T^n$ with its standard contact structure.
Let $B (r) \subset A$ be an open ball with center $a \in S^{n - 1}$ and radius $r > 0$ with respect to the standard metric on $S^{n - 1}$, and $\varphi_1$ be the time-one map of a smooth vector field that is compactly supported in $A$, and maps the ball $B (r)$ to the ball $B (r / 4)$.
The diffeomorphism $\id \times \varphi_1$ is compactly supported in $T^n \times A$, and thus extends to a diffeomorphism $\varphi$ of $M$ that is isotopic to the identity.
We will use Eliashberg's shape invariant to show that $\varphi$ can not be approximated uniformly by contact diffeomorphisms.
Then by Gromov's maximality theorem combined with Lemma~\ref{lem:no-anti-contact}, the $C^0$-closure of $\Diff (M,\xi)$ does not contain any diffeomorphisms that do not preserve the (oriented) contact structure $\xi$.

Suppose $\varphi_k$ is a sequence of contact diffeomorphisms that converges uniformly to $\varphi$.
For $k$ sufficiently large, the restriction $\iota_k$ of $\varphi_k$ to $T^n \times B (r)$ is a contact embedding into $T^n \times B (r / 2)$, and thus by Proposition~\ref{pro:contact-inv}, the shape invariant satisfies the relation $I_C (T^n \times B (r), \iota_a^*) \subset I_C (T^n \times B (r / 2), \iota_a^* \circ \iota_k^*)$.
For $k$ sufficiently large, the map $\iota_k$ is homotopic to the restriction of $\varphi$ to $T^n \times B (r)$, and the latter fixes the submanifold $T^n \times a$.
Thus the induced map $\iota_a^* \circ \iota_k^*$ equals $(\iota_k \circ \iota_a)^* = \iota_a^*$.
In particular,
	\[ I_C (T^n \times B (r), \iota_a^*) \subset I_C (T^n \times B (r / 2), \iota_a^* \circ \iota_k^*) = I_C (T^n \times B (r / 2), \iota_a^*) = B (r / 2) \]
by Proposition~\ref{pro:a-shape}.
On the other hand, $I_C (T^n \times B (r), \iota_a^*) = B (r)$, and we arrive at a contradiction.
That means the sequence $\varphi_k$ cannot exist, hence the proof.
\end{proof}

As a final remark, note that Theorem~\ref{thm:contact-rigidity} implies that the set of diffeomorphisms that preserve $\xi$ but reverse its coorientation is also $C^0$-closed in the group $\Diff (M)$ of all diffeomorphisms.
Indeed, if this set is non-empty, then it is equal to $\varphi \cdot \Diff (M,\xi)$ for some diffeomorphism $\varphi$ that reverses the coorientation of $\xi$.

\bibliography{rigidity}
\bibliographystyle{amsalpha}
\end{document}